\newtheorem{thm}{Theorem}
\newtheorem{prop}[thm]{Proposition}
\newtheorem{lemma}[thm]{Lemma}
\newtheorem{cor}[thm]{Corollary}
\def\w{\omega_{\Omega}}
\def\C{\mathbb C}
\def\Cn{\mathbb{C}^n}
\def\el{l_e}
\def\kl{l^\Omega_k}
\def\krl{l^\Omega_{\kappa}}
\def\kr{\kappa_\Omega}
\def\O{\Omega}
\def\endofproof{\hfill \square}
\def\g{g_\Omega}
\def\sdd{\sqrt{\delta_{\Omega}(x)\delta_{\Omega}(y)}}
\theoremstyle{remark}
\newtheorem{rem}{Remark}
\theoremstyle{definition}
\newtheorem{defn}{Definition}
\begin{document}

\title[Strongly Goldilocks domains and quantitative visibility]
{Strongly Goldilocks domains, quantitative visibility, and applications}

\author{Nikolai Nikolov and Ahmed Yekta Ökten}

\address{N. Nikolov\\
Institute of Mathematics and Informatics\\
Bulgarian Academy of Sciences\\
Acad. G. Bonchev Str., Block 8\\
1113 Sofia, Bulgaria}
\email{nik@math.bas.bg}
\address{Faculty of Information Sciences\\
State University of Library Studies
and Information Technologies\\
69A, Shipchenski prohod Str.\\
1574 Sofia, Bulgaria}

\address{A.Y.Ökten\\
Institut de Math\'ematiques de Toulouse; UMR5219 \\
Universit\'e de Toulouse; CNRS \\
UPS, F-31062 Toulouse Cedex 9, France} \email{ahmed$\_$yekta.okten@math.univ-toulouse.fr}

\begin{abstract} In the last decade, G. Bharali and A. Zimmer defined a class of domains called Goldilocks domains and they showed that such a domain satisfies a visibility condition with respect to the Kobayashi extremal curves. Inspired by their construction, we define a subclass of Goldilocks domains called strongly Goldilocks domains and we prove a quantitative visibility result on strongly Goldilocks domains. Using our quantitative visibility result, we extend the Gehring-Hayman theorem on simply connected planar domains to strongly Goldilocks domains. As an application of our construction, we also give lower estimates to the Kobayashi distance.
\end{abstract}

\thanks{The first named author is partially supported by the National Science Fund,
	Bulgaria under contract KP-06-N52/3. This work was started during the stay of the
	second name author at the Institute of Mathematics and Informatics,
	Bulgarian Academy of Sciences (March 2022) in the framework of the same contract. \\
The second author received support from the University Research School EUR-MINT (State support managed
by the National Research Agency for Future Investments program bearing the reference ANR-18-EURE-0023).}

\subjclass[2020]{32F45}

\keywords{Kobayashi metric, (pseudo)convex domains, Gromov hyperbolicity, visibility property, (strongly) Goldilocks domains}

\maketitle
\section{Introduction}
It is well-known that the unit ball endowed with the Kobayashi distance is negatively curved and strongly pseudoconvex domains resemble the unit ball close to the boundary. Therefore it is reasonable to expect the Kobayashi geometry of strongly pseudoconvex domains to exhibit a similar behaviour. However, it is often not possible to study local geometry of the Kobayashi distance. Balogh and Bonk \cite{BB} showed that the Kobayashi geometry of strongly pseudoconvex domains are hyperbolic in a global sense, that is, strongly pseudoconvex domains are Gromov hyperbolic w.r.t. the Kobayashi distance. Zimmer\cite{Z} and Fiacchi\cite{FI} extended their result to smooth bounded convex domains of finite type and smooth bounded pseudoconvex domains of finite type in $\mathbb{C}^2$.

Another hyperbolic behaviour of geodesic metric spaces is the visibility property. Informally, it means that geodesics joining points close to different boundary points tend to bend inside the metric space. The paper \cite{BZ} defined the Goldilocks domains by demanding growth conditions on the Kobayashi distance and the Kobayashi-Royden pseudometric with no assumption of boundary regularity. There, they showed that Goldilocks domains endowed with the Kobayashi distance satisfy a variant of the visibility property w.r.t. their Euclidean boundaries and provided some applications of visibility. See also \cite{BM,BG,BGZ,BNT} and references therein.

In this paper, we define a subclass of Goldilocks domains called strongly Goldilocks domains (see Definition \ref{stronggoldi}). Roughly speaking a strongly Goldilocks domain $\O$ is a bounded domain with a sufficiently regular boundary and its Kobayashi-Royden pseudometric grows faster than a function $1/\w,$ where $\w$ satisfies certain growth conditions. In particular, the class of strongly Goldilocks domains contains strongly pseudoconvex domains, smooth bounded pseudoconvex domains of finite type and $m$-convex domains with Dini-smooth boundary.

We first provide a quantitative visibility result on a strongly Goldilocks domain $\O.$ Explicitly, we prove Theorem \ref{firstbig} which, , shows that if $\gamma:I\longrightarrow \Omega$ is a $\lambda$-geodesic (see Definition \ref{lambdageodesics}) joining $x,y\in\Omega,$ then $\max\delta_\Omega\circ\gamma\geq c \g^{-1}(\|x-y\|),$ where $\g$ is a function associated to $\Omega$ derived from $\w$ and the constant $c>0$ depends only on $\O$ and $\lambda$. Then Proposition \ref{cconvex} gives an explicit form of this estimate on strongly pseudoconvex domains, smooth bounded pseudoconvex domains of finite type and $m$-convex domains with Dini-smooth boundary.

With our quantitative visibility condition, we give an extension of the classical Gehring-Hayman inequality to convexifiable strongly Goldilocks domains. Namely, our Theorem \ref{secondbigcfb} shows that if $\Omega$ is a convexifiable strongly Goldilocks domain with $\gamma$ and $\g$ as above, we have  $\el(\gamma)\leq  C\g(\|x-y\|),$ where $\el(\gamma)$ denotes the Euclidean length of the curve $\gamma$ and $C>0$ depends only on $\O$ (see Corollary \ref{betterboundsthanchineseguys} for some explicit estimates). This shows that on convexifiable strongly Goldilocks domains, Kobayashi geodesics are also short in Euclidean length. As the situation for convexifiable domains is more intricate than for convex domains, we study the problem in two separate subsection.

Finally, as another application of our quantitative visibility condition, we prove Theorem \ref{lowerboundfinal} which provides a relatively sharp lower bound to the Kobayashi distance on strongly Goldilocks domains.

\section{Preliminaries}\label{preelim}
\subsection{Notation} $\:$

	(i) Let $\Omega$ be a domain in $\Cn$, then for $x\in\Omega$, $v\in\Cn$ we set $$\delta_\Omega(x):=\displaystyle{\inf\{\lVert x-y \rVert: y\in\partial\Omega\}} \:\:\:\:\: \text{and} \:\:\:\:\: \delta_\Omega(x;v):=\displaystyle{\inf\{| \lambda | : x+\lambda {v} \in \partial \Omega, \: \lambda\in \mathbb{C}\}}. $$
	
	(ii) For a curve $\gamma:[0,1]\to\O$ we set $D_\gamma:=\max_{t\in[0,1]}\delta_\O(\gamma(t)).$
	
	(iii) Let $f,g$ be real valued functions taking non-negative values on a set $X$. We say $f \lesssim g$ or $g \gtrsim f$ if there exists $C>0$ such that $f(x) \leq C g(x)$ for all $x\in X$. We say $f\sim g$ if $f\lesssim g$ and $f \gtrsim g$.
\subsection{The Kobayashi distance} Let $\Omega$ be a domain in $\Cn$. Recall that the Lempert function of $\Omega$ is defined as $$l_\Omega(x,y):=\inf\{\tanh^{-1}(|\alpha|): f\in\mathcal{O}(\Delta,\Omega), \:f(0)=x,\:f(\alpha)=y\},$$ where $\mathcal{O}(\Delta,\Omega)$ denotes the set of holomorphic functions on the unit disc $\Delta$ into $\Omega$.
	
In general the Lempert function does not satisfy the triangle inequality. The Kobayashi pseudodistance is the largest pseudodistance not exceeding the Lempert function. The Kobayashi length of a continuous curve $\gamma:[0,1]\rightarrow\Omega$ is defined as $$ \kl(\gamma):=\displaystyle{\inf_{n\in\mathbb{N}}\left\{\sum_{j=1}^n l_\Omega(x_{j-1},x_j):x_0=\gamma(0), x_n=\gamma(1), \: x_j\in\gamma([0,1])\right\}} $$ and the Kobayashi pseudodistance on $\Omega$ is given as
$$k_\Omega(x,y):=\displaystyle{ \inf \kl(\gamma) },$$ where the infimum is taken over all continuous curves joining $x,y\in\Omega$.

Another relevant notion is the Kobayashi-Royden pseudometric. For $x\in \Omega$ and $v\in\Cn$ it is defined as
$$\kappa_\Omega(x;v)=\inf\{|\alpha|:\exists\varphi\in\mathcal O(\Delta,\Omega):
\varphi(0)=x,\alpha\varphi'(0)=v\}.$$ Due to \cite[Theorem 1]{R} we have
$$k_\Omega(x,y)=\displaystyle{\inf \{ \krl(\gamma): \gamma \: \text{is piecewise}\:\mathcal{C}^1,\: \gamma(0)=x,
\:\gamma(1)=y\}},
$$
where $$\krl(\gamma):=\int_0^1\kappa_\Omega(\gamma(t);\gamma'(t))dt.$$

The Kobayashi-Buseman pseudometric (see e.g. \cite[p. 135]{JP}) $\hat k_\O$ is the largest pseudometric with convex indicatrix not exceeding $k_\O$. In particular (see e.g. \cite[proof of Theorem 3.6.4]{JP}), if $\gamma:[0,1]\to\O$ is a piecewise $\mathcal{C}^1$-curve, then
\begin{equation}\label{eqn:comparinglengths}
	 l^\O_k(\gamma)\leq l^\O_{\hat\kappa}(\gamma)\leq l^\O_\kappa(\gamma),
\end{equation}
where $l^\O_{\hat{\kappa}}(\gamma):=\int_0^1 \hat \kappa_\O(\gamma(t);\gamma'(t))dt$.

A domain $\Omega\subset\Cn$ is said to be hyperbolic if $k_\Omega$ is a distance and complete hyperbolic if $(\Omega,k_\Omega)$ is complete as a metric space. The Hopf-Rinow theorem \cite[Proposition 3.7]{BH} asserts that if $\Omega$ is complete hyperbolic, then $(\Omega,k_\O)$ is a geodesic space, that is for any $x,y\in\O$ there exists a continuous curve $\gamma:[0,1]\to\O$ joining $x$ and $y$ such that $k_\Omega(x,y)=l^k_\O(\gamma)$. We call such a curve a Kobayashi geodesic. As in general Kobayashi geodesics need not exist, the authors in \cite{BZ} defined $(\lambda,\epsilon)$-quasi-geodesics and $(\lambda,\epsilon)$-almost-geodesics.
In this paper, we use the concept of $\lambda$-geodesics, whose definition we give below.

\begin{defn}\label{lambdageodesics}
	Let $\Omega\subset\Cn$ be a hyperbolic domain, $\lambda \geq 1$. We say that a curve $\gamma:[0,1]\to\Omega$ is a \textit{$\lambda$-geodesic} for $\kappa_\O,$ resp. $\hat\kappa_\O,$ if for all $s,t\in [0,1]$ we have
	 $$ \krl(\gamma|_{[s,t]}) )\leq \lambda k_\Omega(\gamma(s),\gamma(t)),\quad\text{resp. }l^\O_{\hat{\kappa}}(\gamma|_{[s,t]}) \leq \lambda k_\Omega(\gamma(s),\gamma(t)).$$
\end{defn}

It is easy to see that any $\lambda$-geodesic is Lipschitz, so by taking a reparametrization, one sees that $\lambda$-geodesics for $\kappa_\O$ correspond to $(\lambda,0)$-almost-geodesics in the sense of \cite{BZ}. Moreover, by \eqref{eqn:comparinglengths}, any $1$-geodesic is a Kobayashi geodesic.

\begin{rem}\label{rem:1geodesics} By \cite[Proposition 4.6]{BZ}, if $\O$ is a hyperbolic domain and $\gamma:[0,1]\to\O$ is a Kobayashi geodesic, then $\gamma$ is Lipschitz and $l^\O_k(\gamma) = l^\O_{\hat\kappa}(\gamma)$. Thus, in the case of hyperbolicity, any Kobayashi geodesic is
a $1$-geodesic for $\hat\kappa_\O$ and vice versa. It remains unclear whether the same is true for $\kappa_\O.$
\end{rem}

From now on, $\lambda$-geodesic will mean $\lambda$-geodesic for $\kappa_\O.$  However the reader can verify that our results also remain true for $\lambda$-geodesics for $\hat\kappa_\O.$

\begin{defn}	\label{growth}
		Let $\O$ be a domain in $\mathbb{C}^n$. We say that $(\Omega,k_\Omega)$ has \textit{$\alpha$-growth} if there exist  some  $\alpha>0$
		and $x_0\in \O$ such that
		$$
		\sup_{x\in \Omega} \left( k_\Omega (x_0,x)- \alpha\log \frac{1}{\delta_\Omega(x)} \right) <\infty.
		$$
\end{defn}
Let $p\in\partial \Omega$ be a $\mathcal{C}^1$-smooth boundary point. We say that $p$ is a Dini-smooth boundary point if the inner unit normal vector to $\partial \Omega$ is a Dini-continuous function near $p$. This means that there exists a neighbourhood $U$ of $p$ such that $\int_{0}^{\epsilon_0} \frac{ w(t)}{t}dt<\infty$, where $$w(t):=\sup\{\|n_p-n_q\|:p,q\in \partial\O\cap U, \:\:\: \|p-q\| < t\}$$ and $n_p,n_q$ denotes the inner unit normal to $\partial\O$ taken at $p,q$ respectively. We say that $\Omega$ has Dini-smooth boundary if any $p\in\partial \Omega$ is a Dini-smooth boundary point. It is clear that any $\mathcal{C}^{1,\alpha}$-smooth boundary point is a Dini-smooth boundary point. The following estimate is crucial for our purposes.

\begin{lemma}\label{dinilemma}\cite[Corollary 8]{NA}
	Let $\Omega\subset\Cn$ be a domain with Dini-smooth boundary. Then there exists $C>0$ such that we have $$k_\Omega(x,y) \leq \log\left(1+\dfrac{C\|x-y\|}{\sqrt{\delta_\Omega(x)\delta_\Omega(y)}}\right), \:\:\:\:\: x,y \in \O .$$
\end{lemma}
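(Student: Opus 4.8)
The plan is to reduce the estimate to the model case of a Euclidean ball, where it is essentially an exact computation, and then to globalise using the Dini hypothesis to control $\kr$ near $\partial\Omega$. First I would establish the ball case: if $B=B(c,R)$ and $x,y\in B$, then $k_B(x,y)\le\log\left(1+2\|x-y\|/\sqrt{\delta_B(x)\delta_B(y)}\right)$. After the affine normalisation $B=\Bn$ one has $k_{\Bn}(x,y)=\tanh^{-1}\rho$ with $1-\rho^2=(1-|x|^2)(1-|y|^2)/|1-\langle x,y\rangle|^2$; the Cauchy--Schwarz inequality gives $|1-\langle x,y\rangle|^2\le\|x-y\|^2+(1-|x|^2)(1-|y|^2)$, and combined with $1-|z|^2\ge\delta_{\Bn}(z)$ this yields $\rho^2\le W^2/(1+W^2)$ for $W:=\|x-y\|/\sqrt{\delta_{\Bn}(x)\delta_{\Bn}(y)}$, whence $k_{\Bn}(x,y)\le\log\left(W+\sqrt{1+W^2}\right)\le\log(1+2W)$. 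This short elementary chain is what produces the precise constant $2$ in the statement.

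Next, the case of points close to one another: if $\|x-y\|\le\frac{1}{\sqrt 2}\max(\d_\Omega(x),\d_\Omega(y))$, say $\d_\Omega(x)\ge\d_\Omega(y)$, I would apply the ball estimate to $B:=B(x,\d_\Omega(x))\subset\Omega$. Since $x$ is the centre of $B$ one gets $k_\Omega(x,y)\le k_B(x,y)=\tanh^{-1}(\|x-y\|/\d_\Omega(x))$, and because $\sdd\le\d_\Omega(x)$ it suffices to verify $\tanh^{-1}t\le\log(1+2t)$ for $0\le t\le 1/\sqrt 2$, equivalently $(1-t)(1+2t)^2\ge 1+t$, which holds on that range. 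No boundary regularity is used here.

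It remains to treat far points, $\|x-y\|>\frac{1}{\sqrt 2}\max(\d_\Omega(x),\d_\Omega(y))$; here the right-hand side is comparable to $\frac12\log\frac{1}{\d_\Omega(x)}+\frac12\log\frac{1}{\d_\Omega(y)}$ up to a term controlled by $\|x-y\|$, so this part is really a statement about boundary behaviour. I would join $x$ to $y$ by a three-arc curve: ascend from $x$ along the inner normal at its nearest boundary point to a depth of order $\max(\d_\Omega(x),\d_\Omega(y),\|x-y\|)$, reaching a point $x^{*}$; define $y^{*}$ symmetrically; and connect $x^{*}$ to $y^{*}$. The middle arc can be bounded using the previous two steps (and, if needed, a quasi-hyperbolic estimate along a fixed-depth curve), since at that depth $\d_\Omega(x^{*})$ and $\d_\Omega(y^{*})$ dominate a fixed multiple of $\|x^{*}-y^{*}\|$. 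Each ascent I would estimate by integrating $\kr$ in the normal direction, using that the Dini condition forces $\kr(p+t\nu_p;\nu_p)\le (1+\omega(t))/(2t)$ with $\int_0 \omega(t)\,t^{-1}\,dt<\infty$ --- that is, the normal derivative of $k_\Omega$ agrees with the half-plane value $1/(2\d_\Omega)$ up to an integrable remainder; this is precisely the place where the Dini hypothesis, rather than mere $\mathcal C^{1}$-smoothness, is indispensable. Summing the three contributions produces a bound of the shape $\log\!\left(c\,\|x-y\|/\sdd\right)$.

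The ball estimate and the close-points case are routine; I expect the real difficulty to lie in the far-points case, and specifically in forcing the constant in $\log\!\left(c\,\|x-y\|/\sdd\right)$ down to exactly $2$ (and in keeping the geometric mean $\sdd$ rather than a minimum). The crude triangle-inequality bookkeeping sketched above only yields some constant depending on $\Omega$, and since $\|x-y\|/\sdd$ is unbounded that is not enough; obtaining the sharp value $2$ forces one to run the comparison with the model half-plane with essentially no loss, which is exactly the effect that the integrable Dini modulus, together with a scale-adapted choice of the detour depth, is meant to guarantee.
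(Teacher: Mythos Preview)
This lemma is not proved in the present paper; it is simply quoted from \cite[Corollary~8]{NA}. There is therefore no in-paper argument to compare your proposal against.

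On the substance of your sketch: the ball computation and the close-points case ($\|x-y\|\le\tfrac{1}{\sqrt2}\max(\d_\Omega(x),\d_\Omega(y))$) are correct and already deliver the constant~$2$ exactly. The gap is precisely where you place it. In the far-points regime the three-arc construction, with the triangle inequality applied arc by arc, yields only
\[
k_\Omega(x,y)\le\log\Bigl(\dfrac{C\,\|x-y\|}{\sdd}\Bigr)
\]
with $C=C(\Omega)$, and you do not carry out any step that forces $C$ down to~$2$. Saying that ``the integrable Dini modulus, together with a scale-adapted choice of the detour depth, is meant to guarantee'' this is not a proof: each ascent contributes a bounded additive error relative to the half-plane model, the middle arc contributes another, and these errors do \emph{not} cancel---after exponentiation they become a multiplicative constant in front of $\|x-y\|/\sdd$. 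So as written, your proposal proves a weaker statement with an unspecified constant.

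Two remarks. First, every application of this lemma in the present paper (see the proofs of Lemma~\ref{stronggoldi4} and of Theorem~\ref{secondbigcfb}) uses it only in the form $k_\Omega(x,y)\le\log\bigl(1+C\|x-y\|/\sdd\bigr)$ with a generic $C>0$; for those purposes your argument, once the far-points case is written out, is already sufficient. Second, if you do want the sharp constant, the route in \cite{NA} does not go through a three-arc decomposition; it relies instead on the fact that Dini-smoothness yields a uniform interior tangent ball and then runs the comparison with the ball model globally rather than patching local pieces via the triangle inequality. Consulting \cite{NA} directly is the right move if the exact value~$2$ matters to you.
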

Notice that the lemma above implies that if $\O\subset\Cn$ is a domain with Dini-smooth boundary, then $(\O,k_\O)$ has $1/2$-growth.

The next lemma follows from the proof of \cite[Theorem 5.4]{B} (see also \cite[(2)]{N2}).
\begin{lemma}\label{convexlemma}
	Let $\Omega$ be a bounded convex domain. We have $$ k_\Omega(x,y)\geq \dfrac{1}{2}\left|\log\left( \dfrac{\delta_\Omega(x)}{\delta_\Omega(y)} \right)\right|, \:\:\:\:\: x,y \in \O .$$
\end{lemma}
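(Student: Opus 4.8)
The plan is to reduce the statement, by means of a single affine holomorphic projection, to an explicit computation on a half-plane, and then to combine this with the distance-decreasing property of $k_\Omega$.

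First I would assume, without loss of generality, that $\delta_\Omega(y)\le\delta_\Omega(x)$. Since $\partial\Omega$ is compact, choose $p\in\partial\Omega$ with $\|y-p\|=\delta_\Omega(y)$. Convexity of $\Omega$ yields a real supporting hyperplane at $p$; writing the underlying real-linear functional as $\Re$ of the unique nonzero $\C$-linear functional, rescaling so that its Euclidean gradient has norm $1$, and translating so that it vanishes at $p$, I obtain a holomorphic map $\ell\colon\Omega\to H$, where $H:=\{w\in\C:\Re w<0\}$. The distance-decreasing property then gives $k_\Omega(x,y)\ge k_H(\ell(x),\ell(y))$.

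Next I would identify $|\Re\ell|$ geometrically: for any $z\in\Omega$ the open ball $B(z,\delta_\Omega(z))$ is contained in $\Omega$, hence in $\{\Re\ell<0\}$, so $|\Re\ell(z)|=\operatorname{dist}(z,\{\Re\ell=0\})\ge\delta_\Omega(z)$; and since $p\in\{\Re\ell=0\}$ achieves the distance from $y$ to $\partial\Omega$, one in fact has $|\Re\ell(y)|=\delta_\Omega(y)$. It then remains to prove the one-variable inequality $k_H(a,b)\ge\tfrac12\bigl|\log(|\Re a|/|\Re b|)\bigr|$ for $a,b\in H$. For this I would transport the standard disc formula through a Cayley transform to obtain $\cosh\!\bigl(2k_H(a,b)\bigr)=1+\dfrac{\|a-b\|^2}{2|\Re a|\,|\Re b|}$, estimate $\|a-b\|\ge|\Re a-\Re b|=\bigl||\Re a|-|\Re b|\bigr|$, and note that the resulting lower bound equals $\cosh\!\bigl(\log(|\Re a|/|\Re b|)\bigr)$; monotonicity of $\cosh$ on $[0,\infty)$ finishes this step.

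Combining the pieces, $k_\Omega(x,y)\ge k_H(\ell(x),\ell(y))\ge\tfrac12\log\dfrac{|\Re\ell(x)|}{\delta_\Omega(y)}\ge\tfrac12\log\dfrac{\delta_\Omega(x)}{\delta_\Omega(y)}=\tfrac12\bigl|\log(\delta_\Omega(x)/\delta_\Omega(y))\bigr|$, where the last two steps use $|\Re\ell(x)|\ge\delta_\Omega(x)\ge\delta_\Omega(y)$. The only step that is not routine bookkeeping is the half-plane estimate; an alternative route, integrating the pointwise bound $\kappa_\Omega(z;v)\ge|\ell_z(v)|/(2\delta_\Omega(z))$ along an arbitrary path from $x$ to $y$, is available but messier, because the supporting functional $\ell_z$ must be chosen at the nearest boundary point to $z$ and hence varies with the base point — so I would favour the global projection argument above.
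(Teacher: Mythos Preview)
Your argument is correct and is precisely the standard supporting-hyperplane projection that underlies the references the paper cites (\cite{B}, \cite{N2}) in lieu of a proof: pick a nearest boundary point to the point of smaller boundary distance, project holomorphically onto the associated complex half-plane, and use monotonicity of $k$ together with the explicit half-plane formula. Your handling of the normalisation $|\Re\ell(z)|=\operatorname{dist}(z,\{\Re\ell=0\})$ and of the $\cosh$ identity is clean; nothing is missing.
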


The upper bound below easily follows for any domain; for a proof of the lower bound see e.g. \cite[Lemma 2.5]{Z}.

\begin{lemma}\label{convexlemmafund} Let $\Omega\subset\Cn$ be a convex domain. We have
	$$ \dfrac{1}{2\delta_\Omega(x;v)}\leq \kappa_\Omega(x;v) \leq \dfrac{1}{\delta_\Omega(x;v)}, \:\:\:\:\: x\in \O, \:\:\: v \in \Cn . $$
\end{lemma}

The following estimate is mentioned in \cite[p.~633]{NTR} and \cite[Remark 2.7]{Z}. For convenience of the reader,
we include a proof.

\begin{lemma}\label{lem:proofisgiven} Let $\Omega$ be a convex domain in $\Cn$. We have $$ k_\Omega(x,y) \geq \dfrac{1}{2}\log\left(1+\dfrac{1}{\min\{\delta_\Omega(y;{x-y}),\delta_\Omega(x;{x-y})
\}}\right), \:\:\:\:\: x,y \in \O.$$
\end{lemma}

\begin{proof}
	Without loss of generality assume that $\delta_\Omega(x;x-y)\geq\delta_\Omega(y;x-y)$. Let $L$ denote the complex line passing through $x$ and $y$, and $p\in L\cap\partial\Omega$ be with $\|y-p\|=\|x-y\|\delta_\Omega(y;{x-y})$. Set $\pi:\Cn\rightarrow L$ to be the projection of $\Cn$ onto $L$ in the direction of a supporting hyperplane to $\partial\Omega$ at $p$. Note that such an hyperplane exists due to convexity of $\Omega$.
	
As $\pi(\O)$ is convex and $\delta_{\pi(\Omega)}(y)=\|y-p\|$, it follows by \cite[Proposition 2]{NTR} that
	$$ k_\Omega(x,y)\geq k_{\pi(\Omega)}(x,y)\geq \dfrac{1}{2}\log\left(1+\frac{1}{\delta_\Omega(y;x-y)}\right) .$$

\end{proof}
\subsection{Gromov hyperbolicity and the visibility property}

\begin{defn} Let $(X,d)$ be a metric space. The Gromov product of $x,y\in X$ w.r.t. $o\in X$ is defined to be $$ (x|y)_o:=\dfrac{1}{2}(d(x,o)+d(o,y)-d(x,y)).$$
	A metric space $(X,d)$ is said to be \textit{Gromov hyperbolic} if for some $\delta>0$ and any $ x,y,z,o\in X$ we have
	$$(x|y)_o \geq \min\{(x|z)_o,(z|y)_o\}-\delta.$$
\end{defn}

We are interested in the case $X=\O$ and $d=k_\O$. We say that a domain $\O\subset\Cn$ is Gromov hyperbolic if $(\O,k_\O)$ is a Gromov hyperbolic
space. Note that the strongly pseudoconvex domains and the smooth bounded pseudoconvex domains of finite type in $\mathbb{C}^2$ are Gromov hyperbolic by \cite[Theorem 1.4]{BB} and \cite[Theorem 1.1]{FI} respectively. Furthermore, \cite[Theorem 1.1]{Z} shows that a smooth bounded convex domain is Gromov hyperbolic if and only if it is of finite type.

The following follows from the more general geodesic stability lemma (see e.g. \cite[p. 401, Theorem 1.7]{BH}).

\begin{lemma}\cite[Theorem 1.7]{BH}\label{geodesicstabilitylemma}
	Let $\O$ be a complete hyperbolic domain which is Gromov hyperbolic. Let $\gamma:[0,1]\rightarrow \O$ be a Kobayashi geodesic and let $\sigma:[0,1]\to\O$ be a $\lambda$-geodesic joining the same two points. Then there exists $K>0,$ depending only on $\O$ and $\lambda,$ such that for all $t\in[0,1]$ there exists $s\in[0,1]$ such that $k_\O(\gamma(t),\sigma(s))\leq K$.
\end{lemma}

We refer the reader to \cite[Part III]{BH} for a detailed discussion on Gromov hyperbolic metric spaces and also to the recent expository article \cite{BG} where the authors discuss constructions related to Gromov hyperbolicity on complex domains.

Another relevant notion is the visibility property, we follow the definition given in \cite{BNT}.
\begin{defn}
	\label{bntvisibility}
	Let $\Omega$ be a complete hyperbolic domain in $\Cn$. Let $p,q \in \partial \Omega$, where $p\neq q$. We say that the pair $\{p,q\}$ has \textit{visible
	geodesics}
	if there exist neighborhoods $U, V$ of $p, q$ respectively such that $\overline{U} \cap \overline{V} = \emptyset$,
	and a compact set $K \subset \Omega$ such that for any geodesic $\gamma:[0,1]\rightarrow \Omega$
	with $\gamma(0)\in U$, $\gamma(1)\in V$, then $\gamma ([0,1]) \cap K \neq \emptyset$.
	
	We say that $\Omega$ satisfies \textit{the visibility property} if any pair $\{p,q\} \subset \partial \Omega$, where $p\neq q$
	has visible  geodesics.
\end{defn}

The definition given above only makes sense when the domain is complete hyperbolic. In \cite{BZ} (see also \cite{BM}), the authors gave a more general definition of the visibility property. There they also defined the following class of domains and showed that they satisfy the visibility property in their own context. We recall their definition below.

\begin{defn}\label{goldilocksdefn}
	Let $\O$ be a bounded domain in $\Cn$. We say that $\Omega$ is \textit{Goldilocks} if:
	
(i)		$$ \displaystyle{\int_0^{\epsilon_0}}\frac{M_\Omega(r)}{r}dr < \infty ,$$ where $$M_\Omega (r):=\sup\left\{\frac{1}{\kappa_\Omega(x;v)}: \|v\|=1, \: \delta_{\Omega}(x)\leq r  \right\};$$

(ii) $(\Omega,k_\Omega)$ has $\alpha$-growth for some $\alpha > 0$.
\end{defn}

Notice that the definition above intuitively gives the reason why the geodesics (or curves that approximate geodesics in a suitable sense) tend to spend as little time as possible near the boundary, the Kobayashi-Royden pseudometric grows too quickly but the Kobayashi distance grows too slowly for geodesics to stay close to the boundary.

Recall a geometric notion given in \cite{BZ}. Roughly speaking we say  that a domain $\Omega$ satisfies the interior cone condition if any point near its boundary belongs to the central axis of a uniform cone centered at a boundary point. \cite[Lemma 2.3]{BZ} shows that Condition 2 in Definition \ref{goldilocksdefn} is satisfied by domains that satisfy the interior cone condition. Moreover, it is not difficult to see that uniform cone condition is satisfied by domains with Lipschitz boundaries, in particular for convex domains.

On the other hand, Condition 1 is satisfied for a large class of domains including bounded pseudoconvex domains of finite type and bounded convex domains with a boundary that is not too flat in the precise sense given in \cite[Lemma 2.9]{BZ}. This discussion shows that although its definition is rather abstract, it is reasonable to expect that "regular" domains satisfy Goldilocks conditions. As a partial converse, by \cite[Proposition 2.15]{BZ} we have that the Goldilocks domains are pseudoconvex.

\section{Strongly Goldilocks domains}\label{qvisibility}
In this section we introduce and give examples of strongly Goldilocks domains, and provide a quantitive visibility condition for such domains.

\subsection{Definition and properties}

\begin{defn}\label{stronggoldi} Let $\Omega$ be a \emph{bounded} domain in $\Cn$. Set $d_\O:=\max\{\delta_\O(x):x\in\O\}$.
We say that $\Omega$ is \emph{strongly Goldilocks} (SG) if:
	
(i) $\Omega$ has Dini-smooth boundary;

(ii) The Kobayashi-Royden pseudometric of $\Omega$ satisfies
\begin{equation}\label{lb}
\kr(x;v)\geq\dfrac{\|v\|}{\omega_\O(\delta_\Omega(x))},\quad x\in\O,\ v\in\Bbb C^n,
\end{equation}
where $\w:(0,d_\O]\rightarrow\mathbb{R}^{+}$ is a strongly Goldilocks (SG) function, that is,

	(a) $\frac{\w(r)}{r}$ is decreasing and $\frac{w(d_\O)}{d_\O}\ge e;$
	
	(b) $\w(r)\log\left(\frac{\w(r)}{r}\right)$ is increasing;
	
	(c) $$\int_0^{d_\O}\frac{\w(r)}{r}\log\left(\frac{\w(r)}{r}\right)dr < \infty. $$
\end{defn}

Although the conditions above seem somewhat restrictive we will later see that they are natural in the sense that they hold for relatively large class of domains.

\begin{rem}\label{rem:remarkaboutSGdomains}$\:$ Note that (a) and (b) above imply that the function $\w$ is strictly increasing.
\end{rem}

\begin{prop} If $\Omega\subset\Bbb C^n,$ $n\ge 2,$ and \eqref{lb} holds, then $\w(r)\gtrsim\sqrt r.$
\end{prop}

\begin{proof} Let $q\in\Omega$ and $p\in\partial \Omega$ be such that $\|p-q\|=\delta_\Omega(q)=:c$.
Denote by $n_p$ the unit inner normal vector to $\partial \Omega$ at $p.$ Set $B$ to be the Euclidean ball with center
at $q$ and radius $c$. Then for any $v\in T_p^{\Bbb C}(\partial \O)$ and $r\in(0,2c)$
we have
$$\dfrac{\|v\|}{\sqrt{(2c-r)r}}=\kappa_B(p+rn_p;v)\geq \kappa_\Omega(p+rn_p;v)\geq
\dfrac{\|v\|}{\w(r)}.$$
\end{proof}

Observe also that $\w \gtrsim M_\Omega,$ where $M_\Omega$ comes from Definition \ref{goldilocksdefn}.
As SG domains have Dini-smooth boundary, \cite[Lemma 2.3]{BZ} shows that the set of SG domains form a strict subset of Goldilocks domains. In particular, by \cite[Proposition 2.15]{BZ} and \cite[Theorem 1.4]{BZ}, they are pseudoconvex and they satisfy the visibility property.

We present some observations which will be useful in what will follow.
\begin{prop}\label{propertiesofstronggoldi} Let $\omega:(0,d]\to \mathbb{R}^+$ be a SG function. For $r\in(0,d],$ set
$$g_\omega(r):=\int_{0}^{r}\frac{\omega(t)}{t}\log\left(\frac{\omega(t)}{t}\right)dt.$$
Then:

		\emph{(i)} $g_\omega$ is a strictly increasing function, so it has an inverse;
		
		\emph{(ii)} $g_\omega(r)\ge \omega(r);$
		
		\emph{(iii)} $\frac{g_\omega(r)}{r}$ is a decreasing function.
\end{prop}
\begin{proof} (i) This statement is obvious.

(ii) Since $h(t):=\frac{\omega(t)}{t}\ge e$ is decreasing, we have $$g_\omega(r)=\int_{0}^{r}\dfrac{w(t)}{t}\log\left(\frac{\omega(t)}{t}\right)dt\ge\frac{\omega(r)}{r}\int_{0}^{r}dt=\omega(r).$$

(iii) It follows from
$$\left(\frac{g_\omega(r)}{r}\right)'=\frac{\int_{0}^{r}(h(r)\log h(r)-h(t)\log h(t))dt}{r^2}\le 0.$$

\end{proof}
Note that by the properties of functions $\omega,g_\omega$ we see that if $K\geq1$
\begin{equation}\label{wecangettheconstantout}
	\omega(Kr)\leq K\omega(r) \:\:\: \text{and} \:\:\: g_\omega(Kr)\leq Kg_\omega(r).
\end{equation}
This property will be important in our estimates in the upcoming sections.

\subsection{Examples}

Recall that a convex domain $\Omega\subset\Cn$ is said to be $m$-convex if there exists $C>0$ such that
for every $x\in\Omega$ and $v\in\Cn$ with $\|v\|=1$ we have
$$\delta_\O(x;v)\le C\delta_\O^{1/m}(x).$$
This notion extends the notion of finite type in the case of smooth convex domains (see e.g. \cite{Z}).

We point out that
\begin{equation}\label{eqn:specialcases}
	\kappa_\Omega(x;v)\geq \dfrac{c\|v\|}{\delta^{\epsilon}_\Omega(x)},\quad x\in\O,\ v\in\Cn,
\end{equation}
with:

-- $\epsilon=1/2$ if $\O$ is strongly pseudoconvex (see e.g. \cite[Theorem 10.4.2]{JP});

-- $\epsilon=1/m$ if $\O$ is $m$-convex (by Lemma \ref{convexlemmafund});

-- $\epsilon=1/m$ if $\O\subset\Bbb C^2$ is pseudoconvex of finite type $m$ (see \cite[Theorem 1]{Cat});

-- $\epsilon>0$ if $\O\subset\Bbb C^n$ is pseudoconvex of finite type (see \cite[Theorem 1]{Cho}).

Note that such domains are SG (assuming Dini smoothness if $\O$ is $m$-convex) as the following
proposition shows.

\begin{prop}\label{cconvex} Let $\epsilon\in(0,1)$ and $d>0$. Then
the function $\omega(r):=Cr^\epsilon$ is SG on (0,d] if and only $C\ge d^{1-\epsilon} e^{c_\epsilon}$, where $c_\epsilon:=\max\{{-1+1/\epsilon},1\}$.
Moreover, for a fixed constant $c\in(0,1),$
$$g_\omega(r)\sim r^\epsilon\log(1/r)\quad\text{and}\quad g^{-1}_\omega(r)\sim\left(\frac{r}{\log(1/r)}\right)^{1/\epsilon},
\quad 0\le r\le c.$$
\end{prop}

\begin{proof} The first statement follows directly. Straightforward calculations lead to
$$g_\omega(r)=\frac{Cr^\epsilon}{\epsilon}\left((1-\epsilon)\log\frac{1}{r}+\frac{1-\epsilon}{\epsilon}+\log C\right).$$
Then
$$\lim_{r\to 0}\frac{g_\omega(r)}{r^\epsilon\log(1/r)}=\frac{C(1-\epsilon)}{\epsilon},\quad
\lim_{r\to 0}\frac{g^{-1}_\omega(r)}{(\frac{r}{\log(1/r)})^{1/\epsilon}}=
\lim_{s\to 0}\frac{s}{(\frac{g_\omega(s)}{\log(1/g_\omega(s))})^{1/\epsilon}}=\left(\frac{\epsilon}{C(1-\epsilon)}\right)^{1/\epsilon}$$
and the proof easily follows.
\end{proof}

So we may say that a domain $\Omega$ is $SG_\epsilon$ if it has Dini-smooth boundary and \eqref{eqn:specialcases} holds with $\epsilon\in(0,1/2].$

\subsection{Quantitative visibility} The goal of this subsection is to present Theorem \ref{firstbig}, which can be considered to be the main theorem of this paper. It concerns SG domains and it gives a way to measure how much the geodesics tend to go inside the domain, in other words, it gives a quantitative version of visibility. Notice that it roughly implies that the larger $\omega_\O$ is the "less" visible $\O$ is. In the convex case, by Lemma \ref{convexlemmafund}, this situation occurs when the boundary gets more and more flat. Moreover, it also shows that the geodesics which join points that are further away in the Euclidean distance tend to stay further away from the boundary.

From now on, for a SG domain $\Omega$ with a SG function $\omega_\Omega,$ we set $\g:=g_{\omega_\Omega}.$

	\begin{thm}\label{firstbig}
		Let $\Omega$ be a SG domain and $\gamma:[0,1]\rightarrow\Omega$ be a $\lambda$-geodesic. Then there exists $C>0,$ \textit{depending only on $\O$ and $\lambda,$}  such that $$ \el(\gamma) \leq C \g(D_\gamma) .$$
	\end{thm}

Recall that $D_\gamma=\max_{t\in[0,1]}\delta_\O(\gamma(t))$ and $\el(\gamma)$ denotes the Euclidean length of $\gamma$.

Let $\Omega$ be as above and $\gamma:[0,1]\rightarrow \Omega$ be a $\lambda$-geodesic on $\Omega$ which joins $x\in\Omega$ to $y\in\Omega$. We want to compare its Euclidean length with the quantity $D_\gamma$ in terms of the function $\g$. To do so, we will first divide $\gamma$ into pieces with endpoints of comparable boundary distance.

Set $D:=D_\gamma$ and let $N_x,N_y$ denote the integers such that $$ \dfrac{D}{e^{N_x}}\leq \delta_{\Omega}(x)<\dfrac{D}{e^{N_x-1}} \: \text{and} \: \dfrac{D}{e^{N_y}}\leq \delta_{\Omega}(y)<\dfrac{D}{e^{N_y-1}}.$$

Travelling from $x$ to $y$, for $i\in\{-N_x+1,0\}$, set  $t_i=\min\{t\in [0,1]: \delta_\Omega(\gamma(t))=D e ^{i} \}$, $x_i:=\gamma(t_i)$ and for $i\in\{0,N_y-1\}$ set $t_i:=\max\{t\in [0,1]:\delta_\Omega(\gamma(t))=D e^{-i}\}$, $y_i:=\gamma(t_i)$. Furthermore, let $t_{-N_x}:=0$, $t_{N_y}:=1$,$x_{-N_x}:=x$ and $y_{N_y}=y$.
We set
$\gamma_i:=\gamma|_{[t_i,t_{i+1}]}$ if $i\leq -1$, $\gamma_i:=\gamma|_{[t_{i-1},t_{i}]}$ if $i\geq 1$ and $\gamma_0$ to be the remaining piece of $\gamma$, i.e. the piece of $\gamma$ that connects $x_0$ to $y_0$. Also let $D_i:=D e ^{i}$ if $i\leq 0$ and $D_i= D e^{-i}$ if $i\geq 1$. By construction we have
\begin{equation}\label{dividingcurve}
	\delta_\Omega(z)\leq e D_i \:\:\: \text{if} \:\:\: z\in\gamma_i \:\:\:\:\: \text{and} \:\:\:\:\:
\delta_{\Omega}(z)\in \{ D_i, e D_i\} \:\:\: \text{if $z$ is an endpoint of} \: \gamma_i.
\end{equation}

Our proof of Theorem \ref{firstbig} is heavily inspired from \cite{LWZ}, where they follow the ideas given in \cite{GH}. Essentially, the lemma below allows us to get somewhat better bounds.

\begin{lemma}\label{stronggoldi4} There exists $C>0,$ \textit{depending only on $\O$ and $\lambda$,} such that $\forall i\in\{-N_x,...,N_y\}$ we have
	$$
	\el(\gamma_i)\leq C \w(D_i)\log\left(\dfrac{\w(D_i)}{D_i}\right).
	$$
\end{lemma}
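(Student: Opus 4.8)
The plan is to bound $\el(\gamma_i)$ on a single piece $\gamma_i$ by exploiting the two competing estimates built into the definition of an SG domain. On the piece $\gamma_i$ the boundary distance is comparable to $D_i$ (in fact $\delta_\Omega(z)\le eD_i$ there, and $\delta_\Omega\ge D_i$ at the endpoints by \eqref{dividingcurve}). The lower bound \eqref{equationofmotion} gives, for the Kobayashi--Royden length,
$$
\krl(\gamma_i)\;=\;\int \kr(\gamma_i(t);\gamma_i'(t))\,dt\;\ge\; \frac{c}{\w(eD_i)}\int \|\gamma_i'(t)\|\,dt\;\ge\; \frac{c'\,\el(\gamma_i)}{\w(D_i)},
$$
using that $\frac{\w(x)}{x}$ decreasing forces $\w(eD_i)\le e\,\w(D_i)$. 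So it suffices to bound $\krl(\gamma_i)$ from above by $C\,\w(D_i)\log\!\big(\frac{\w(D_i)}{D_i}\big)$ up to the factor $c'$. Since $\gamma$ is a $\lambda$-geodesic for Kobayashi--Royden, $\krl(\gamma_i)\le \lambda\, k_\Omega(p_i,q_i)$ where $p_i,q_i$ are the endpoints of $\gamma_i$; so the task reduces to estimating $k_\Omega(p_i,q_i)$ from above.

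For that upper bound I would use the Dini-smooth estimate, Lemma \ref{dinilemma}: $k_\Omega(p_i,q_i)\le \log\!\big(1+\frac{2\|p_i-q_i\|}{\sqrt{\delta_\Omega(p_i)\delta_\Omega(q_i)}}\big)$. The denominator is $\asymp D_i$ since both endpoint distances lie in $[D_i,eD_i]$. The numerator $\|p_i-q_i\|$ is controlled by the Euclidean diameter of $\gamma_i$, which I would like to say is $\preceq \w(D_i)$: heuristically, $\gamma_i$ moves between boundary-distance levels that differ by a bounded factor, and the "radial scale" at level $D_i$ is $\w(D_i)$ by Proposition \ref{propertiesofstronggoldi}(1) (so $\w(D_i)\ge D_i^{1/2}\gg D_i$ for small $D_i$, meaning the relevant geometric scale is $\w(D_i)$, not $D_i$). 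Granting $\|p_i-q_i\|\preceq \w(D_i)$, Lemma \ref{dinilemma} yields $k_\Omega(p_i,q_i)\preceq \log\!\big(1+\frac{\w(D_i)}{D_i}\big)\asymp \log\!\big(\frac{\w(D_i)}{D_i}\big)$, and then
$$
\el(\gamma_i)\;\le\; \frac{\w(D_i)}{c'}\,\krl(\gamma_i)\;\le\;\frac{\lambda}{c'}\,\w(D_i)\,k_\Omega(p_i,q_i)\;\preceq\;\w(D_i)\log\!\left(\frac{\w(D_i)}{D_i}\right),
$$
which is the claim.

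The main obstacle is establishing the geometric bound $\|p_i-q_i\|\preceq \w(D_i)$ — or some substitute of it that still produces the $\log(\w(D_i)/D_i)$ factor. A purely local argument controlling $\el(\gamma_i)$ directly is circular, since that is exactly what we want; so the bound on $\|p_i-q_i\|$ must come from a soft feature of the subdivision. One clean route: since at an endpoint of $\gamma_i$ we have $\delta_\Omega\ge D_i$, the point $p_i$ lies at Euclidean distance $\ge D_i$ from $\partial\Omega$; by following the geodesic a short way one shows the chord cannot be longer than the scale dictated by the metric estimate \eqref{equationofmotion}, because otherwise the Kobayashi length would already exceed $\lambda k_\Omega(p_i,q_i)\le \lambda\log(1+2\,\mathrm{diam}(\gamma)/D_i^{1/2})$, contradicting that $\gamma_i$ is a restriction of a $\lambda$-geodesic once $\w(D_i)$ is inserted correctly. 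I would iterate this bootstrapping: a crude bound $\|p_i-q_i\|\le \mathrm{diam}(\Omega)$ feeds into Lemma \ref{dinilemma} to bound $k_\Omega(p_i,q_i)$, hence $\krl(\gamma_i)$, hence $\el(\gamma_i)$, and one checks the output is already $\preceq \w(D_i)\log(\w(D_i)/D_i)$ — note $\w(D_i)\log(\w(D_i)/D_i)\preceq \w(D_i)\cdot\tfrac12\log(1/D_i)\cdot$const is finite and the subdivision has only $N_x+N_y\asymp \log(D/\delta_\Omega(x))$ pieces, so summing recovers Theorem \ref{firstbig} via $\sum_i \w(D_i)\log(\w(D_i)/D_i)\preceq \g(D)$ by comparing the sum to the integral defining $\g$ using monotonicity of $t\mapsto\w(t)\log(\w(t)/t)$. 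The delicate point to get right is the interplay of the constants $e$-factors with the monotonicity hypotheses (a),(b) on $\w$, which guarantee $\w(eD_i)\asymp\w(D_i)$ and $\w(D_i)\log(\w(D_i)/D_i)\le \w(eD_i)\log(\w(eD_i)/(eD_i))$, making the telescoping legitimate.
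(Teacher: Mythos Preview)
Your setup is right --- combine the lower bound \eqref{equationofmotion} with the Dini-smooth upper estimate from Lemma \ref{dinilemma} applied to the endpoints of $\gamma_i$ --- but the argument stalls precisely at the point you flag. The bound $\|p_i-q_i\|\preceq \w(D_i)$ is not available a priori; in fact it is (slightly stronger than) the conclusion itself, since $\|p_i-q_i\|\le \el(\gamma_i)$ and the lemma asserts $\el(\gamma_i)\preceq \w(D_i)\log(\w(D_i)/D_i)$. Your heuristics about ``radial scale $\w(D_i)$'' and ``the chord cannot be longer than the scale dictated by \eqref{equationofmotion}'' are circular for the same reason.

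The paper closes this gap with a single clean move: instead of seeking an independent bound on $\|p_i-q_i\|$, it uses the trivial $\|p_i-q_i\|\le \el(\gamma_i)$ inside Lemma \ref{dinilemma}, obtaining the \emph{self-referential} inequality
\[
\el(\gamma_i)\;\le\;\lambda\,\w(D_i)\,\log\!\Big(1+\frac{C\,\el(\gamma_i)}{D_i}\Big).
\]
The left side is linear in $\el(\gamma_i)$, the right side logarithmic, so the inequality forces $\el(\gamma_i)$ below the unique positive root $t_i$ of $f_i(x)=x-\lambda\w(D_i)\log(1+Cx/D_i)$; one then checks $t_i\asymp \w(D_i)\log(\w(D_i)/D_i)$ by a short contradiction argument using $\w(D_i)/D_i\to\infty$.

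Your alternative ``crude bootstrap'' using $\|p_i-q_i\|\le \mathrm{diam}(\Omega)$ does in fact work, and in a single step (no iteration needed): it yields $\el(\gamma_i)\le C\w(D_i)\log(1/D_i)$, and one then needs $\log(1/D_i)\preceq \log(\w(D_i)/D_i)$. This follows from Proposition \ref{propertiesofstronggoldi}(1), since $\w(D_i)\succeq D_i^{1/2}$ gives $\w(D_i)/D_i\succeq D_i^{-1/2}$ and hence $\log(\w(D_i)/D_i)\ge \tfrac12\log(1/D_i)-C'$. But you did not supply this step --- you only asserted ``one checks,'' and your parenthetical remark actually records the reverse inequality. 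So the bootstrap route is viable and arguably simpler than the paper's implicit-inequality analysis, but it is not complete as written.
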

\begin{proof}
	As $\gamma$ is a $\lambda$-geodesic it follows that each $\gamma_i$ is also a $\lambda$-geodesic. Thus, by Lemma \ref{dinilemma}, \eqref{dividingcurve} implies that there exists $C_0>0$ independent from $\gamma$ such that \begin{equation}\label{stronggoldi1} \krl(\gamma_i)\leq \lambda \log\left(1+\dfrac{C_0 \el (\gamma_i)}{D_i}\right).
	\end{equation}
	
	On the other hand, by Definition \ref{stronggoldi} and \eqref{dividingcurve} we have
	\begin{equation}\label{stronggoldi2} \el(\gamma_i) \leq \w(e D_i) \krl(\gamma_i).
	\end{equation} Setting $C_2:=\max\{C_0,\lambda e\}$, by \eqref{wecangettheconstantout}, \eqref{stronggoldi1} and \eqref{stronggoldi2} we get
	\begin{equation}\label{stronggoldi3}
		\el(\gamma_i)\leq C_2 \w(D_i) \log\left(1+\dfrac{C_2 \el (\gamma_i)}{D_i}\right).\end{equation}
		
	\textit{Claim.} For any $C>0$, there exists $C'>0$ such that if $x \geq 0$, $y \geq e$ and $x \leq C y \log(1+x)$, then $x \leq C' y \log(y)$.
	
	To see this, set $f(x):=x/(C\log(1+x))$ and $g(x):=\max\{e,f(x)\log(f(x))\}$. Observe that $h(x):=x/g(x)$ is a positive continuous function on $\mathbb{R}^+_0$ and $\lim_{x\to`\infty} h(x)= C$. Then $C':= \sup h(x) < \infty$. Since $t \log t$ is an increasing function for $t\geq e^{-1}$ and $y\geq \max\{e,f(x)\}$ we obtain $x \leq C' g(x) \leq C' y \log(y)$.
	
	Recall that we assume that $\w(r)/r \geq e$. Then the lemma follows from the claim and \eqref{stronggoldi3} by setting $C:=C^2_2$, $x:=C_2 \el(\gamma_i)/D_i$ and $y:=\w(D_i)/D_i$. \end{proof}
	
\begin{lemma}\label{bound} Let $z\in\gamma_i$. Then there exists $C > 0$, depending only on $\O$ and $\lambda$, such that $$ \min \{\el(\gamma_{x,z}) , \el(\gamma_{z,y})\} \leq C \g(D_i),$$ where $\gamma_{x,z}$ and $\gamma_{z,y}$ denote the pieces of $\gamma$ joining $x$ to $z$ and $z$ to $y$ respectively.
\end{lemma}
\begin{proof}
	Without loss of generality we assume that $i\leq 0$. Then by Lemma \ref{stronggoldi4} there exists $C>0$, depending only on $\O$ and $\lambda$, such that for each $j\leq i$ we have $$\el(\gamma_j)\leq C \w(D_j)\log\left(\dfrac{\w(D_j)}{D_j}\right).$$
	
	Thus, due to Definition \ref{stronggoldi} we have $$\min\{\el(\gamma_{x,z}),\el ( \gamma_{z,y})\}\leq\el(\gamma_{x,z}) \leq \displaystyle{\sum_{j=-N_x}^{i}\el(\gamma_{j})}\leq$$ $$  \displaystyle{ C\sum_{j=-N_x}^{i}\w(D e ^{j})\log\left(\dfrac{\w(De^{j})}{De^{j}}\right) } \leq	
	\displaystyle{ C\sum_{j=-\infty}^{i}\w(De^{j})\log\left(\dfrac{\w(De^{j})}{De^{j}}\right)} \leq$$
	$$ \displaystyle{ C\int_{-\infty}^{i}\w(De^{t})\log\left(\dfrac{\w(De^{t})}{De^{t}}\right)}dt\displaystyle{\leq C\int_{0}^{D_i}\left(\dfrac{\w(r)}{r}\right)\log\left(\dfrac{\w(r)}{r}\right)dr}\leq C  \g(D_i).$$
\end{proof}

\textit{Proof of Theorem \ref{firstbig}.}
Let $z\in \gamma_i$ be a point in the image of $\gamma$ with $\el(\gamma_{x,z})=\el(\gamma_{z,y}),$ where $\gamma_{x,z}$ and $\gamma_{z,y}$ denote the pieces of $\gamma$ joining $x$ to $z$ and $z$ to $y$ respectively. By Proposition \ref{propertiesofstronggoldi} and Lemma \ref{bound} it follows that
$$\el(\gamma)=\el(\gamma_{x,z})+\el(\gamma_{z,y})\leq 2 C \g(D_i) \leq 2 C \g (D_\gamma). $$

$\: \endofproof $

We have the following immediate corollary of Theorem \ref{firstbig} and Proposition \ref{cconvex}.

\begin{cor}\label{curvegoesinside}
		Let $\Omega$ be a SG domain and $\gamma:[0,1]\rightarrow\Omega$ be a $\lambda$-geodesic joining $x,y\in\Omega$. Then there exists
$c_0>0,$ depending only on $\O$ and $\lambda,$ such that
		$$ D_\gamma \ge c_0\g^{-1}(\|x-y\|) .$$

In particular, if $\Omega$ is a $SG_\epsilon$ domain and $c\in(0,1),$ then
$$D_\gamma \gtrsim\left(\frac{\|x-y\|}{\log(1/\|x-y\|)}\right)^{1/\epsilon},\quad \|x-y\|\le c.$$
\end{cor}

\section{A Gehring-Hayman type theorem}\label{ghaymanthms}
Recall the classical Gehring-Hayman inequality for simply connected planar domains.
\begin{thm}\cite{GH}\label{thm:ofgehringhayman}
	Let $G\subsetneq\C$ be a simply connected planar domain. There exists $C>0$ not depending on $G$ such that the inequality
	$$\el(\gamma)\leq C\el(\gamma')$$ holds, where $\gamma:[0,1]\to G$ is a Kobayashi geodesic and $\gamma':[0,1]\to G$ is any other curve joining the same two points.
\end{thm}

For a deeper discussion on the above result see the survey \cite{PR}.

In \cite{LWZ}, the authors provided extensions of Theorem \ref{thm:ofgehringhayman} to strongly pseudoconvex domains and bounded $m$-convex domains with Dini-smooth boundary in $\Cn$. In this section we will extend and improve \cite[Theorem 4.1]{LWZ} and \cite[Theorem 1.5]{LWZ}.

Let $\O$ be a domain in $\Cn$ and recall that $p\in\partial\O$ is a convexifiable boundary point if there a exist a neighbourhood of $p$, $U_p$ and a biholomorphism $\Phi:U_p\rightarrow D$ such that $\Phi(p)$ is a convex boundary point of $\Phi(\Omega\cap U_p)$. We say that $\Omega$ is convexifiable if any $p\in\partial \Omega$ is a convexifiable boundary point. The goal of this section is to prove the following result, which extends Theorem \ref{thm:ofgehringhayman} to convexifiable SG domains.

 \begin{thm}\label{secondbigcfb}
 	Let $\Omega$ be a convexifiable SG domain and $\gamma:[0,1]\rightarrow\Omega$ be a Kobayashi geodesic joining $x, y\in\Omega$. Then there exists $C > 0$, depending only on $\O$, such that $$ \el(\gamma)\leq C \g(\|x-y\|).$$ Furthermore, if $\Omega$ is Gromov hyperbolic, then the same assertion holds for $\lambda$-geodesics (with different $C$ that also depends on $\lambda$).
 \end{thm}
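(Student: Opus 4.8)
The plan is to combine Theorem \ref{firstbig} with the convex-type lower bounds for $k_\Omega$ from Lemmas \ref{convexlemma} and \ref{lemmaneedsproof}, distinguishing according to whether the endpoints lie deep inside $\Omega$ relative to $\|x-y\|$. I first treat the case in which $\Omega$ is convex and $\gamma$ is a genuine geodesic. Write $a=\delta_\Omega(x)$, $b=\delta_\Omega(y)$, fix $\eps_1\in(0,1/4)$, and let $p^{*}\in\gamma$ realize $D:=\max_t\delta_\Omega(\gamma(t))$. Since $\gamma$ is a geodesic, $k_\Omega(x,p^{*})+k_\Omega(p^{*},y)=k_\Omega(x,y)$; applying Lemma \ref{convexlemma} to the pairs $(x,p^{*})$ and $(p^{*},y)$ and Lemma \ref{dinilemma} to $(x,y)$ yields $D\le \sdd+2\|x-y\|$. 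If $\|x-y\|\ge\eps_1\sdd$, then $D\le(1+2\eps_1^{-1})\|x-y\|$, so Theorem \ref{firstbig} and the fact that $\g(t)/t$ is decreasing (Proposition \ref{propertiesofstronggoldi}) give $\el(\gamma)\le C\g(D)\le C(1+2\eps_1^{-1})\g(\|x-y\|)$.

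It remains to treat the deep case $\|x-y\|<\eps_1\sdd$. From $|a-b|\le\|x-y\|$ one gets $a\asymp b$ and $\|x-y\|\le a/2$; hence $y\in B(x,a)\subset\Omega$ and $k_\Omega(x,y)\le k_{B(x,a)}(x,y)=\tanh^{-1}(\|x-y\|/a)\le 2\|x-y\|/a$. Geodesic additivity and Lemma \ref{convexlemma} then bound $\delta_\Omega(\gamma(t))\le a\,e^{2k_\Omega(x,y)}\le C_1a$ along all of $\gamma$. Now pick points $x=p_0,\dots,p_N=y$ of $\gamma$ in order with mesh $\to 0$. Because $\Omega$ is SG, the universal upper bound of Lemma \ref{convexlemmafund} forces $\delta_\Omega(z;v/\|v\|)\le\omega(\delta_\Omega(z))/c$ for all $z,v$; inserting this and $\delta_\Omega(p_j)\le C_1a$ into Lemma \ref{lemmaneedsproof} applied to $(p_j,p_{j+1})$ gives, for fine partitions, $k_\Omega(p_j,p_{j+1})\succeq\|p_j-p_{j+1}\|/\omega(a)$. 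Summing, using $\sum_j k_\Omega(p_j,p_{j+1})=k_\Omega(x,y)\le 2\|x-y\|/a$, and letting the mesh $\to 0$ yields $\el(\gamma)\preceq \tfrac{\omega(a)}{a}\|x-y\|$. Since $\|x-y\|\le a$ and $\omega(t)/t$ is decreasing, $\tfrac{\omega(a)}{a}\|x-y\|\le\omega(\|x-y\|)$, and $\omega\preceq\g$ (Proposition \ref{propertiesofstronggoldi}); thus $\el(\gamma)\preceq\g(\|x-y\|)$, completing the convex case.

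For a general convexifiable SG domain the scheme is identical: the only inputs that used convexity are Lemmas \ref{convexlemma} and \ref{lemmaneedsproof}, so the task is to establish for $\Omega$ the estimates $k_\Omega(x,y)\ge\tfrac12|\log(\delta_\Omega(x)/\delta_\Omega(y))|-C_0$ and $k_\Omega(u,w)\ge\tfrac12\log\!\bigl(1+c\,\|u-w\|/\omega(\delta_\Omega(u))\bigr)-C_0$ with a fixed additive constant (whose presence only degrades the comparison constants). I would obtain these by transporting the convex estimates through a convexifying biholomorphism and controlling its distortion near $\partial\Omega$ by means of the Dini-smoothness of the boundary — localized versions near each boundary point suffice, the complement being harmlessly bounded. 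Finally, if $\Omega$ is Gromov hyperbolic and $\sigma$ is a $\lambda$-geodesic joining $x$ and $y$, Proposition \ref{geodesicstabilitylemma} keeps $\sigma$ within bounded $k_\Omega$-distance of a genuine geodesic $\gamma$; consequently $\max_t\delta_\Omega(\sigma(t))\asymp D$ and $\sigma$ lives in the same depth range, so the estimate through Theorem \ref{firstbig} and the Riemann-sum argument — which only needs $\sum_j k_\Omega(p_j,p_{j+1})\le L_k(\sigma)\le\lambda k_\Omega(x,y)$ together with the Lipschitz continuity of $\lambda$-geodesics — apply unchanged.

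The main obstacle is precisely this last transfer to convexifiable domains. The SG condition cannot simply be pushed through the convexifying map, since neither $\delta_\Omega$ nor the growth function $\omega$ is biholomorphically invariant, so one must control the boundary behaviour of the convexifying map directly; this is the delicate point, and presumably the reason the convex and convexifiable cases are handled in separate subsections. Everything else — the dichotomy on $\|x-y\|$ versus $\sdd$, the application of Theorem \ref{firstbig}, the comparison with a Euclidean ball, and the Riemann-sum estimate — is routine.
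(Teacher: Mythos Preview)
Your convex-case argument is essentially the paper's: the same dichotomy on $\|x-y\|$ versus $\sdd$, the same use of Theorem \ref{firstbig} in the far case, and the close case amounts to the same estimate $\el(\gamma)\le C\,\w(D)\,k_\Omega(x,y)$. The paper obtains this last inequality in one line by integrating the SG lower bound $\kappa_\Omega(\gamma(t);\gamma'(t))\ge c\|\gamma'(t)\|/\w(\delta_\Omega(\gamma(t)))\ge c\|\gamma'(t)\|/\w(D)$ along $\gamma$, which is simpler than your Riemann-sum detour through Lemma \ref{lemmaneedsproof}; in particular, the paper never invokes Lemma \ref{lemmaneedsproof} for Theorem \ref{secondbigcfb}, only Lemma \ref{convexlemma}. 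Your handling of the Gromov-hyperbolic $\lambda$-geodesic case via Proposition \ref{geodesicstabilitylemma} matches the paper's.

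For the convexifiable case you correctly isolate the missing ingredient --- an analogue of Lemma \ref{convexlemma} up to an additive constant --- and you are right that localization near boundary points is the route. But ``controlling distortion via Dini-smoothness'' does not name the actual mechanism, and this is where the real work lies. The paper proves the needed estimate as Theorem \ref{convexgoldilocks}, and the key tool is not boundary regularity of the convexifying map but the notion of $k$-points from \cite{BNT}: one first shows that every boundary point of a convexifiable SG domain is a $k$-point (Lemma \ref{anypointiskpoint}, via Lemma \ref{localkglobalk} and the visibility of the local convex model), and then invokes the localization $k_\Omega\ge k_{\Omega\cap U_p}-c$ of Lemma \ref{localization} to transfer the convex inequality. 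Dini-smoothness by itself does not yield this localization of the Kobayashi \emph{distance}, which is the genuinely non-local step. Note also that if you replace your Riemann-sum argument in the close case by the direct integral bound above, the second transfer lemma you flagged (the convexifiable analogue of Lemma \ref{lemmaneedsproof}) becomes unnecessary for this theorem; the paper only needs Theorem \ref{convexgoldilocks}.
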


Combining Proposition \ref{cconvex} and Theorem \ref{secondbigcfb} gives:
\begin{cor}\label{betterboundsthanchineseguys} Let $c\in(0,1).$
\smallskip
	
	(i) Let $\Omega$ be a strongly pseudoconvex domain and $\gamma$ be a $\lambda$-geodesic joining $x$ to $y$. Then there exists $C>0,$ depending only on $\O,$ $\lambda,$ and $c,$ such that
			$$\el(\gamma)\le C\|x-y\|^{\frac{1}{2}}\log(1/\|x-y\|),\quad \|x-y\|\le c.$$
	
	(ii) Let $\Omega$ be an $m$-convex domain with Dini-smooth boundary and $\gamma$ be a Kobayashi geodesic joining $x$ to $y$. Then there exists $C>0,$ depending only on $\O$ and $c,$ such that
		$$\el(\gamma)\le C\|x-y\|^{\frac{1}{m}}\log(1/\|x-y\|), \quad \|x-y\|\le c.$$
		If $\Omega$ is also Gromov hyperbolic, the same assertion holds for $\lambda$-geodesics (with different constants that also depend on $\lambda$).
\end{cor}

Corollary \ref{betterboundsthanchineseguys} gives much better estimates than \cite[Theorem 1.5]{LWZ} and \cite[Theorem 4.1]{LWZ}.

We would like to note that after this paper appeared, \cite[Theorem 1.7]{LiuPW} improved the estimates given in Corollary \ref{betterboundsthanchineseguys} (with a different approach than ours) removing the logarithmic terms. Same estimates remain true for smooth bounded pseudoconvex domains of finite type in $\C^2$, see \cite[Theorem 1.2]{LiPW}.

The proof of Theorem \ref{secondbigcfb} is motivated by proof of this statement on the simpler yet less general case where $\Omega$ is convex. For convenience, we have two subsections. In the next subsection, we will first prove Theorem \ref{secondbigcfb} with the additional assumption that $\Omega$ is convex, then we will extend our proof to the general case.

\subsection{Proof of Theorem \ref{secondbigcfb} in the convex case} Note that since $\O$ is a convexifiable bounded domain it is complete hyperbolic, hence any $x,y\in\O$ is joined by a Kobayashi geodesic. Moreover, by Lempert's Theorem (see e.g. \cite[Theorem 11.2.1]{JP}) $\kappa_\Omega=\hat\kappa_\Omega$, then Remark \ref{rem:1geodesics} implies that the Kobayashi geodesics of $\O$ are also $1$-geodesics.

Having this in mind, let $\gamma:[0,1]\rightarrow\O$ be a Kobayashi geodesic joining $x$ to $y$ and set $D:=D_\gamma$. Choose $z\in\Omega\cap\gamma(I)$ such that $\delta_{\Omega}(z)=D$. By Lemma \ref{convexlemma} and Lemma \ref{dinilemma} we have
	
	$$
		\log\left(\dfrac{D}{\sqrt{\delta_{\Omega}(x)\delta_{\Omega}(y)}}\right)= \dfrac{1}{2}\log\left(\dfrac{D}{\delta_{\Omega}(x)}\right)+\dfrac{1}{2}\log\left(\dfrac{D}{\delta_{\Omega}(y)}\right)
	 $$ $$\leq k_\Omega(x,z)+k_\Omega(z,y) = k_\Omega(x,y)\leq  \log\left(1+\dfrac{C\|x-y\|}{\sqrt{\delta_{\Omega}(x)\delta_{\Omega}(y)}}\right).
	$$
	
	So we obtain $$ D\leq C\|x-y\|+\sqrt{\delta_{\Omega}(x)\delta_{\Omega}(y)} .$$

	\textit{Case I.} First, we assume that the points are close, that is,
	 \begin{equation}\label{thm2bound2}\sqrt{\delta_{\Omega}(x)\delta_{\Omega}(y)} \geq \|x-y\| \:\:\:\:\: \text{so} \:\:\:\:\:
		D\leq (C+1)\sqrt{\delta_{\Omega}(x)\delta_{\Omega}(y)}.
	\end{equation}

	By Lemma \ref{dinilemma}, Definition \ref{stronggoldi} and the fact that $\log(1+x)\leq x$ for $x\geq 0$ we have that
	\begin{equation}\label{boundinglengthofgeodesics}
		\dfrac{\el(\gamma)}{\w(D)}\leq\displaystyle{\int_{0}^1 \kappa_\Omega(\gamma(t);\gamma'(t))dx=  k_\Omega(x,y)} \leq \frac{C\|x-y\|}{\sdd}. \end{equation}
	
    Recall that by Remark \ref{rem:remarkaboutSGdomains} the function $\w$ is increasing. Then by \eqref{thm2bound2} and \eqref{boundinglengthofgeodesics} we get
	$$\el(\gamma) \leq C\frac{\w(D)\|x-y\|}{\sdd)}\leq C\frac{\w((C+1)\sdd)\|x-y\|}{\sdd}.$$
	
	Continuing above and using \eqref{wecangettheconstantout}, by the assumption that $\sdd \geq \|x-y\|$ we obtain $$\el(\gamma)\leq C\frac{\w((C+1)\sdd)\|x-y\|}{\sdd}$$ $$\leq  C(C+1) \frac{\w(\sdd)\|x-y\|}{\sdd}\leq  C(C+1) \w(\|x-y\|) .$$
	
	By Proposition \ref{propertiesofstronggoldi} this case follows.
	
	\textit{Case II.} Now, we assume that the points are far, that is, \begin{equation}\label{thm2bound3}
		\sqrt{\delta_{\Omega}(x)\delta_{\Omega}(y)}\leq \|x-y\| \:\:\:\:\: \text{so} \:\:\:\:\: D\leq (1+C) \|x-y\|.
	\end{equation}

Using \eqref{wecangettheconstantout} again, by Theorem \ref{firstbig} and \eqref{thm2bound3}, we immediately have $$ \el(\gamma)\leq C' \g(D) \leq C' \g((1+C)\|x-y\|) \leq C' (1+C) \g(\|x-y\|). $$

	As $\gamma$ is chosen arbitrarily, the theorem follows for Kobayashi geodesics.
	
    Now we move on to the case of $\lambda$-geodesics. Assume that $\Omega$ is Gromov hyperbolic and let $\sigma:[0,1]\rightarrow\O$ be a $\lambda$-geodesic joining $x'$ and $y'$. Set $D':=D_\sigma$ and let $z'$ be a point in the image of $\sigma$ satisfying $\delta_{\Omega}(z')=D'$. Consider a  Kobayashi geodesic $\gamma':[0,1]\to\O$ joining $x'$ to $y'$. Lemma \ref{geodesicstabilitylemma} implies that there exists $w$ in the image of $\gamma'$ with $k_\Omega(z',w)\leq K$, where $K>0$ is a constant depending only on $\O$ and $\lambda$. Then by Lemma \ref{convexlemma} $$\dfrac{1}{2}\log\left(\dfrac{\delta_{\Omega}(z')}{\delta_{\Omega}(w)}\right)\leq k_\Omega(z',w) \leq K < \infty.$$
Hence $ D'=\delta_{\Omega}(z')\leq e^{2K} \delta_{\Omega}(w) \leq e^{2K} D_{\gamma'}$.

    Therefore by \eqref{wecangettheconstantout} and above we have \begin{equation}\label{gromovconsequence} \w(D')\leq e^{2K}\w(D_{\gamma'}) \: \text{and} \: \g(D')\leq e^{2K} \g(D_{\gamma'}).
    \end{equation}

    By \eqref{gromovconsequence}, the estimate $$\el(\sigma)\leq  \lambda \w(D')\log\left(1+\dfrac{C\|x'-y'\|}{\sqrt{\delta_\O(x')\delta_\O(y')}}\right),$$ and the estimate $\el(\sigma)\leq C \g(D')$ (which is the conclusion of Theorem \ref{firstbig}) the theorem follows by repeating arguments from above. $\endofproof$

    We now move on to the next subsection, where we establish an analogue of Lemma \ref{convexlemma} for convexifiable SG domains, then prove Theorem \ref{secondbigcfb} in the general case.
\subsection{Proof of Theorem \ref{secondbigcfb}}
The following notion introduced in \cite{BNT} will be useful. Let us recall that $p\in\partial\Omega$ is said to be a $k$-point if for any neighbourhood $U_p$ of $p$, we have that $$ \displaystyle{ \liminf_{x\rightarrow p} \left(K_{\Omega}(x,\Omega\setminus U_p)-\dfrac{1}{2}\log\left(\dfrac{1}{\delta_\Omega(x)}\right)\right) > -\infty}.$$

The following lemma follows from definition.
\begin{lemma}\label{klemma}
	If $p\in\partial \Omega$ is a k-point, then for any neighbourhood $U_p$ of $p$ there exist another neighbourhood $U'_p\subset\subset U_p$ of $p$ and a constant $c > 0$ such that $$k_\Omega(x,y)\geq \dfrac{1}{2}\log\left(\dfrac{1}{\delta_\Omega(x)}\right)-c, \:\:\: x\in \Omega \cap U'_p, \: y\in\Omega\setminus U_p. $$
\end{lemma}

We have the following localization result. We note that the result given as reference is stronger however for our purposes the version given below is sufficient.

\begin{lemma}\cite[Theorem 6.15]{BNT}\label{lem:localization}
	Let $\Omega$ be a domain in $\Cn$ with Dini-smooth boundary and assume that $p\in\partial\Omega$ is a $k$-point. Then for any neighbourhood $U_p$ of $p$, there exist another neighbourhood $V_p\subset\subset U_p$ and a constant $c > 0$ such that we have \begin{equation}\label{localization}
		k_\Omega(x,y)\geq k_{\Omega\cap U_p}(x,y)-c, \:\:\:\:\: x,y \in\Omega\cap V_p.
	\end{equation}
\end{lemma}

The following lemma follows from the more general result given in \cite[Theorem 6.7]{BNT}.

\begin{lemma}\cite[Theorem 6.7]{BNT}\label{localkglobalk}
	Let $\Omega$ be a bounded domain, $p\in\partial \Omega$ and $U_p$ be a neighbourhood of $p$ such that $p$ is a $k$-point for $\Omega\cap U_p$. If $\Omega\cap U_p$ has $\alpha$-growth, then $p$ is also a $k$-point for $\Omega$.
\end{lemma}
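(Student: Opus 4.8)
The plan is to deduce this from the localization machinery of \cite{BNT} --- concretely, from the localization estimate contained in \cite[Theorem 6.7]{BNT}, together with the standing hypothesis that $p$ is a $k$-point of $\Omega\cap U_p$; if one quotes \cite[Theorem 6.7]{BNT} in its full local-to-global strength, the present statement is an immediate specialization, and the steps below merely make the mechanism explicit. First I would reduce to proving the lower bound of Lemma~\ref{klemma} for $\Omega$ itself: that for every neighbourhood $V$ of $p$ there are a smaller neighbourhood $V'\subset V$ and a constant $c>0$ with
\[
 k_\Omega(z,w)\ \geq\ \frac{1}{2}\log\frac{1}{\delta_\Omega(z)}-c
\]
for all $z\in V'\cap\Omega$ close to $p$ and all $w\in\Omega\setminus V$; this property is equivalent to $p$ being a $k$-point of $\Omega$. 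Shrinking $V'$, I may also assume $\overline{V'}\subset U_p$ and that the nearest point of $\partial\Omega$ to any $z\in V'\cap\Omega$ lies in $U_p$, so that $\delta_\Omega(z)=\delta_{\Omega\cap U_p}(z)$ there; and I fix an auxiliary neighbourhood $V''\subset\subset V'$.

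For the core estimate I would argue by truncation. Let $z\in V''\cap\Omega$, $w\in\Omega\setminus V$, and let $\gamma$ be any curve in $\Omega$ joining them; it must leave $V'$, so let $z'\in\partial V'\cap\Omega$ be its first exit point. Since $\overline{V'}\subset U_p$ we have $z'\in(\Omega\cap U_p)\setminus V'$, while $L_k(\gamma)\geq k_\Omega(z,z')$, so it suffices to bound $k_\Omega(z,z')$ from below, uniformly over $z'\in\partial V'\cap\Omega$. Here I invoke \cite[Theorem 6.7]{BNT}: since $\Omega\cap U_p$ has $\alpha$-growth, there are a neighbourhood of $p$ containing $\overline{V'}$ and a constant $c_0>0$ with $k_\Omega(x,y)\geq k_{\Omega\cap U_p}(x,y)-c_0$ for $x,y$ in it. Applying this to $(z,z')$ and then using that $p$ is a $k$-point of $\Omega\cap U_p$ via Lemma~\ref{klemma} (inner neighbourhood $V''$, outer neighbourhood $V'$, $z\in V''$, $z'\in(\Omega\cap U_p)\setminus V'$), I obtain
\[
 k_\Omega(z,z')\ \geq\ k_{\Omega\cap U_p}(z,z')-c_0\ \geq\ \frac{1}{2}\log\frac{1}{\delta_{\Omega\cap U_p}(z)}-c_1-c_0\ =\ \frac{1}{2}\log\frac{1}{\delta_\Omega(z)}-c_1-c_0 .
\]
Taking the infimum over $\gamma$ then gives the displayed bound with $c=c_0+c_1$, which finishes the argument.

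The hard part is the localization inequality $k_\Omega(x,y)\geq k_{\Omega\cap U_p}(x,y)-c_0$ near $p$ used in the middle step. The only elementary comparison is monotonicity of the Kobayashi distance, which gives the opposite bound $k_\Omega\leq k_{\Omega\cap U_p}$; and a curve in $\Omega$ joining two points close to $p$ may genuinely be shorter than any curve confined to $\Omega\cap U_p$. So the reverse comparison is not elementary, and it is precisely here that the $\alpha$-growth hypothesis on $\Omega\cap U_p$ is indispensable --- this is the content one imports from \cite[Theorem 6.7]{BNT}.
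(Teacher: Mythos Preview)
Your proposal is correct and matches the paper's approach: the paper gives no independent proof of this lemma but simply records it as following from \cite[Theorem 6.7]{BNT}, and your write-up does the same, importing the key localization estimate $k_\Omega\geq k_{\Omega\cap U_p}-c_0$ from that reference and then making explicit the routine truncation argument that converts it into the $k$-point conclusion. Your observation that one cannot use Lemma~\ref{localization} here (it assumes $p$ is already a $k$-point of $\Omega$, which would be circular) is exactly right, and this is why the $\alpha$-growth hypothesis and the stronger form of the localization in \cite[Theorem 6.7]{BNT} are needed.
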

\begin{lemma}\label{anypointiskpoint}
	Let $\Omega$ be a convexifiable SG domain. Then any $p\in\partial \Omega$ is a $k$-point.
\end{lemma}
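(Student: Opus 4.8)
The plan is to reduce, via the localisation machinery, to a purely convex statement. By Lemma~\ref{localkglobalk} it suffices to exhibit a neighbourhood $U_p$ of $p$ such that (a) $\Omega\cap U_p$ has $\alpha$-growth and (b) $p$ is a $k$-point for $\Omega\cap U_p$. For (a), since $\partial\Omega$ is Dini-smooth it is in particular Lipschitz, so one may choose $U_p$ so that $\Omega\cap U_p$ has Lipschitz boundary; then $\Omega\cap U_p$ satisfies an interior cone condition and hence has $\alpha$-growth by \cite[Lemma 2.3]{BZ}. For (b), shrink $U_p$ to lie inside a convexifying neighbourhood (Definition~\ref{convexifiabledomains}), so that $\Phi\colon U_p\to D$ is a biholomorphism with $C:=\Phi(\Omega\cap U_p)$ a bounded convex domain. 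Because $\Phi$ is biholomorphic on a \emph{full} neighbourhood of $p$ it is bi-Lipschitz near $p$; hence $\delta_{\Omega\cap U_p}\asymp\delta_C\circ\Phi$ near $p$ and $k_{\Omega\cap U_p}=k_C\circ(\Phi\times\Phi)$, so $p$ is a $k$-point for $\Omega\cap U_p$ if and only if $q:=\Phi(p)$ is a $k$-point for $C$. Moreover $C$ inherits near $q$: boundedness, convexity, Dini-smoothness of its boundary, and --- pulling \eqref{equationofmotion} back through $\Phi$, using the decreasing property $\kappa_{\Omega\cap U_p}\ge\kappa_\Omega$ together with $\omega$ increasing and of bounded distortion (since $\omega(t)/t$ is decreasing) --- a bound $\kappa_C(x;v)\ge c\|v\|/\omega(\delta_C(x))$, equivalently (by Lemma~\ref{convexlemmafund}) $\delta_C(x;v/\|v\|)\le C'\omega(\delta_C(x))$, valid for all $v$ and all $x\in C$ near $q$.

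It then remains to show: a bounded convex domain $C$ with Dini-smooth boundary near $q\in\partial C$ and the above flatness bound near $q$ has $q$ as a $k$-point, i.e.\ for every neighbourhood $W$ of $q$ there is $c_W$ with $k_C(z,C\setminus W)\ge\frac12\log(1/\delta_C(z))-c_W$ for $z$ near $q$. Normalise $q=0$ with outer normal $n_q=e_1$ and supporting hyperplane $\partial H_q=\{\Re z_1=0\}$, $C\subset H_q:=\{\Re z_1<0\}$. The key geometric point, and where the strong Goldilocks hypothesis is used, is that $C$ is \emph{quantitatively non-tangential} at $q$: for each $\rho>0$ there is $\eta>0$ with $C\cap\{|z_1|<\eta\}\subset B(q,\rho)$, equivalently $\inf\{\,|\langle w-q,n_q\rangle|:w\in C,\ \|w-q\|\ge\rho\,\}>0$. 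If this failed one would get interior points $x\to q$ lying close to a non-trivial segment of $\partial C$ through $q$ contained in the complex tangent hyperplane $T_q=\{z_1=0\}$; along the direction $e$ of such a segment $\delta_C(x;e)$ stays bounded below, while $\delta_C(x)\to 0$ forces $C'\omega(\delta_C(x))\to 0$, a contradiction. Granting this, fix $W$ and pick $\eta$ with $C\cap\{|z_1|<\eta\}\subset W$; then every $w\in C\setminus W$ has $|\langle w-q,n_q\rangle|\ge\eta$. For $z$ near $q$ let $p_z\in\partial C$ be a nearest boundary point; then $p_z\to q$ and, by Dini-smoothness, the outer normal $n_{p_z}\to n_q$, so $|\langle z-w,n_{p_z}\rangle|\ge\eta/2$ once $z$ is close enough to $q$. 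Moving from $z$ towards the supporting hyperplane $\partial H_{p_z}$ along the direction $z-w$ shows $\delta_C(z;z-w)\le\delta_C(z)/|\langle z-w,n_{p_z}\rangle|\le 2\delta_C(z)/\eta$; plugging this into Lemma~\ref{lemmaneedsproof} gives $k_C(z,w)\ge\frac12\log\!\bigl(1+c\eta/(2\delta_C(z))\bigr)\ge\frac12\log(1/\delta_C(z))-c_W$, uniformly over $w\in C\setminus W$. Taking the infimum over $w$ yields the $k$-point estimate for $q$, hence by Lemma~\ref{localkglobalk} for $p$ as well.

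The step I expect to be the main obstacle is this geometric ``non-tangential localisation'' of $C$ at $q$ --- equivalently, ruling out that a far point $w$ can sit arbitrarily close, transversally, to $\partial C$ at $q$. Without the flatness bound a convex domain may genuinely have boundary points that are not $k$-points (for instance relative-interior points of a flat, or complex-tangential, piece of $\partial C$), so this is precisely where the full force of the strong Goldilocks condition (the growth of $\kappa_\Omega$ at \emph{every} scale near the boundary, transferred through $\Phi$) must be invoked; making the estimate $\delta_C(z;z-w)\preceq\delta_C(z)$ uniform in $w$ as $z\to q$ is the technical heart of the argument.
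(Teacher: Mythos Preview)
Your reduction strategy --- pass to a convex model $C=\Phi(\Omega\cap U_p)$, establish the $k$-point property at $q=\Phi(p)$, then transfer back via bi-Lipschitz comparison of boundary distances and Lemma~\ref{localkglobalk} --- is exactly the route the paper takes. The divergence, and the genuine gap, is in how you argue that $q$ is a $k$-point for $C$.

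Your ``non-tangential localisation'' claim does \emph{not} follow from the SG flatness bound near $q$. Take $C=\{z\in\mathbb{C}^2:\Re z_1<-(\Im z_2)^2\}$ intersected with a large ball: this domain is convex with $C^\infty$ boundary and one checks directly that $\delta_C(x;v)\le C'\sqrt{\delta_C(x)}$ for every unit $v$ and every $x$ near $q=0$ (so the transferred SG bound holds with $\omega(t)=\sqrt{t}$). Yet $\partial C$ contains the real line $\{(0,s):s\in\mathbb{R}\}$ inside the complex tangent $\{z_1=0\}$, and for any $\eta>0$ the point $(-\eta/2,1)\in C$ has $|z_1|<\eta$ but norm larger than $1$. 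Correspondingly, for $x_t=(-t,0)$ and $e=e_2$ one has $\delta_C(x_t;e)=\sqrt{t}\to0$, so your assertion ``$\delta_C(x;e)$ stays bounded below'' fails as well. The underlying issue is that the SG bound controls inradii of $C$ along \emph{complex} lines, and a real segment in $\partial C$ is perfectly compatible with those inradii collapsing; your contradiction argument implicitly treats $\delta_C(x;e)$ as a real directional distance. Consequently the uniform estimate $\delta_C(z;z-w)\preceq\delta_C(z)$ breaks down for $w$ near this real line (e.g.\ $w=(-\varepsilon,1)$, $z=(-t,0)$ gives $\delta_C(z;e_2)=\sqrt{t}\gg t\approx\delta_C(z)$), and Lemma~\ref{lemmaneedsproof} alone no longer produces the factor $\tfrac12$ in front of $\log(1/\delta_C(z))$ that the $k$-point condition requires.

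The paper sidesteps this entirely: rather than a direction-by-direction estimate it verifies that the convex model is a Goldilocks domain (the Kobayashi--Royden lower bound transfers through $\Phi$ as in \cite[Corollary~2.4]{BZ}, and a bounded convex domain has the needed growth), hence enjoys visibility, and then invokes \cite[Proposition~6.2]{BNT}, which says precisely that for a bounded convex domain with visibility every boundary point is a $k$-point.
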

\begin{proof}
	A key observation which will be used in the proof of this lemma is that if $\Phi$ is a bi-Lipschitz map of closures of two domains $D_1,D_2\subset \mathbb{R}^n$ w.r.t. the Euclidean distance, then
	\begin{equation}\label{important} \delta_{D_1}(x)\sim\delta_{D_2}(\Phi(x))\:\:\:\:\:x\in D_1.
	\end{equation}

	Let $\Omega$ be as above and $p\in\partial\Omega$ be arbitrary. We start the proof by considering a small enough neighbourhood of $p$, $U_p$ so that
	$$\Psi: U_p \rightarrow \Psi(U_p)$$ is biholomorphic and $D:=\Psi(\Omega\cap U_p)$ is a convex domain. As $\Omega$ is a SG domain, by Proposition \ref{propertiesofstronggoldi}, for $x\in \Omega\cap U_p$ close to $p$ we have
	\begin{equation}\label{growthforlemma}
		\kappa_{\Omega\cap U_p}(x;v) \geq \kappa_\Omega(x;v) \geq \dfrac{\|v\|}{\w(\delta_\Omega(x))} = \dfrac{\|v\|}{\w(\delta_{\Omega\cap U_p}(x))}. \end{equation}
	
Take an Euclidean ball $B$ centered at $\Psi(p)$. Consider the domain  $G:=D\cap B$. By convexity, it has $\alpha$-growth for some $\alpha > 0$. Moreover, by \eqref{growthforlemma} and biholomorphic invariance of the Kobayashi-Royden pseudometric, following the argument in the proof of \cite[Corollary 2.4]{BZ} we see that $G$ also satisfies Condition 1 in Definition \ref{goldilocksdefn}. Then $G$ is Goldilocks and, in particular, it satisfies the visibility property. As $G$ is also convex, by  \cite[Proposition 6.2]{BNT} it follows that any boundary point of $G$ is a $k$-point.
	
	By the discussion above, it follows by \eqref{important} that any boundary point of $\Psi^{-1}(G)$ is a $k$-point and $\Psi^{-1}(G)$ has $\alpha$-growth. Applying Lemma \ref{localkglobalk} with $\Omega$ and $\Psi^{-1}(G)$ we see that $p$ is also a $k$-point for $\Omega$. As $p\in\partial\Omega$ was arbitrary, the lemma follows.
\end{proof}
The following Theorem is the analogue of Lemma \ref{convexlemma} for convexifiable SG domains, in the subcase of strongly pseudoconvex domains it follows from the stronger estimate given in \cite{BB} with constant $c=0$.

\begin{thm}\label{convexgoldilocks}
	Let $\Omega$ be a convexifiable SG domain. Then there exists $c>0$ such that  $$k_\Omega(x,y)\geq\dfrac{1}{2}\log\left(\dfrac{\delta_\Omega(y)}{\delta_\Omega(x)}\right)-c,\:\:\:\:\: x,y\in\O.$$
\end{thm}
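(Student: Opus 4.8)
\emph{Proof plan.} The plan is to reduce to the local picture near the boundary point nearest to $x$, where $\Omega$ is biholomorphic to a convex domain, and to apply there the convex estimate Lemma~\ref{convexlemma}; the contributions of the biholomorphism and of the localization are then absorbed into the additive constant, using that every boundary point is a $k$-point. First, two trivial reductions: if $\delta_\Omega(y)\leq\delta_\Omega(x)$ the claimed right-hand side is negative and the inequality is clear since $k_\Omega\geq 0$; and if $\delta_\Omega(x)$ is bounded below by a fixed $\eps_0>0$, then $\delta_\Omega(y)\leq\operatorname{diam}\Omega$ makes $\tfrac12\log(\delta_\Omega(y)/\delta_\Omega(x))$ bounded, so again there is nothing to prove. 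Hence I would assume $\delta_\Omega(x)<\eps_0$, to be fixed small below, and take $p\in\partial\Omega$ with $\|x-p\|=\delta_\Omega(x)$.

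Next I would set up uniform local data. By Lemma~\ref{anypointiskpoint} every $p\in\partial\Omega$ is a $k$-point, and the proof of that lemma produces, near each $p$, a biholomorphism onto a convex domain. Using compactness of $\partial\Omega$ and a finite covering, I would fix a single scale and a single constant $c_0$: for each $p$ there are nested neighbourhoods $V_p''\subset\subset V_p'\subset\subset V_p$ and a biholomorphism $\Psi_p$ defined past $\overline{V_p}$ with $G_p:=\Psi_p(\Omega\cap V_p)$ convex, with $\Psi_p^{\pm1}$ bi-Lipschitz with uniform constants, such that Lemma~\ref{klemma} applied to the pair $V_p''\subset\subset V_p'$ and Lemma~\ref{localization} applied to the pair $V_p'\subset\subset V_p$ both hold with constant $c_0$; and I would shrink $\eps_0$ so that $\delta_\Omega(x)<\eps_0$ forces $x\in V_p''$ and $\delta_{\Omega\cap V_p}(x)=\delta_\Omega(x)$. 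Then I would split on the position of $y$. If $y\notin V_p'$, Lemma~\ref{klemma} gives $k_\Omega(x,y)\geq\tfrac12\log(1/\delta_\Omega(x))-c_0$, which since $\delta_\Omega(y)\leq\operatorname{diam}\Omega$ already yields the claim. If $y\in V_p'$, then $x,y\in V_p'\subset\subset V_p$, and Lemma~\ref{localization}, biholomorphic invariance of $k_\Omega$, and Lemma~\ref{convexlemma} on the convex $G_p$ give
$$
k_\Omega(x,y)\ \geq\ k_{G_p}\!\bigl(\Psi_p(x),\Psi_p(y)\bigr)-c_0\ \geq\ \tfrac12\log\!\left(\tfrac{\delta_{G_p}(\Psi_p(y))}{\delta_{G_p}(\Psi_p(x))}\right)-c_0 .
$$
By \eqref{important} one has $\delta_{G_p}(\Psi_p(x))\leq C\,\delta_{\Omega\cap V_p}(x)=C\,\delta_\Omega(x)$ and $\delta_{G_p}(\Psi_p(y))\geq C^{-1}\delta_{\Omega\cap V_p}(y)\geq C^{-1}\min(\delta_\Omega(y),d_0)$, where $d_0>0$ measures how far $V_p'$ stays inside $V_p$; substituting and separating the subcases $\delta_\Omega(y)\leq d_0$ (where the min is $\delta_\Omega(y)$) and $\delta_\Omega(y)>d_0$ (where $\delta_\Omega(y)$ is bounded above and below, so the $\tfrac12\log(1/\delta_\Omega(x))$-type bound again dominates) finishes this case. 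The constant $c$ is then the maximum of the finitely many constants obtained.

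The hard part will be the boundary-distance bookkeeping in the convexifiable case: checking that $\Psi_p$ is genuinely bi-Lipschitz up to the relevant pieces of $\partial(\Omega\cap V_p)$ so that \eqref{important} applies, and handling a point $y\in V_p'$ that is far from $\partial\Omega$ but close to $\partial V_p$ — there $\delta_{\Omega\cap V_p}(y)$ differs from $\delta_\Omega(y)$, and one must notice that this only happens when $\delta_\Omega(y)$ is bounded away from $0$, so the discrepancy costs only a constant. The compactness argument making all the local constants uniform over $\partial\Omega$ is routine but should be stated.
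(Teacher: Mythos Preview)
Your argument is correct and uses exactly the same three ingredients as the paper's proof: the $k$-point property (Lemma~\ref{anypointiskpoint} and Lemma~\ref{klemma}) to handle the case where $x$ and $y$ are separated, the localization Lemma~\ref{localization} combined with Lemma~\ref{convexlemma} on the convex model to handle the case where both points lie in the same convexifying chart, and the bi-Lipschitz comparison~\eqref{important} to transfer boundary distances across the biholomorphism.

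The only real difference is organizational. The paper argues by contradiction: assuming sequences $x_n,y_n$ violate the estimate with constant $n$, it passes to subsequences converging to $p,q\in\overline\Omega$, rules out $p\neq q$ via the $k$-point estimate, and then, once $p=q\in\partial\Omega$, works in a single convexifying chart around $p$ with no need for uniformity over $\partial\Omega$. Your direct approach instead fixes a finite cover of $\partial\Omega$ by convexifying charts and extracts uniform constants by compactness, then splits on the position of $y$. Both are standard; the contradiction route is shorter because the limit point $p$ hands you the chart for free and the constant emerges a posteriori, while your route is more explicit but forces you to do the boundary-distance bookkeeping (the $\delta_{\Omega\cap V_p}(y)=\min(\delta_\Omega(y),\delta_{V_p}(y))$ discussion and the finite-cover uniformity) up front. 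One small clarification: when you write ``for each $p$'' and then invoke a finite cover, make sure you mean a fixed finite family of charts $(V_{p_j},\Psi_{p_j})$; the point $p=p(x)$ nearest to $x$ need not be one of the $p_j$, you only need $p(x)\in V_{p_j}''$ for some $j$ once $\eps_0$ is small.
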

\begin{proof}
	We suppose the contrary. Then we can find two sequences $x_n,y_n\in\O$ converging to $p,q\in\overline{\Omega}$ respectively which satisfies $$k_\Omega(x_n,y_n)<\dfrac{1}{2}\log\left(\dfrac{\delta_\Omega(y_n)}{\delta_\Omega(x_n)}\right)-n.$$

	By Lemma \ref{klemma} and Lemma \ref{anypointiskpoint} , it follows that if $x_n,y_n$ tend to $p\neq q \in \overline{\Omega}$ we can find a constant $c > 0$ depending on $p,q$ such that $$k_\Omega(x_n,y_n)\geq \dfrac{1}{2}\log\left(\dfrac{1}{\delta_\Omega(x_n)}\right)+ \frac{1}{2}\log\left(\dfrac{1}{\delta_\Omega(y_n)}\right)-c $$ so we can only have the case where $p=q \in \partial \Omega$.
	
	Choose a small enough neighbourhood $U_p$ so that there is a biholomorphism of $U_p$ mapping $\O\cap U_p$ onto a convex domain.
	By Lemma \ref{anypointiskpoint}, taking $U_p$ small enough we can find another neighbourhood $V_p\subset\subset U_p$ of $p$ such that \eqref{localization} holds for the pair $\O$ and $\O\cap U_p$ on $\O\cap V_p$. Then the proof of Lemma \ref{anypointiskpoint} implies that if $ x_n,y_n \in \O\cap V_p$ we have
	$$k_\Omega(x_n,y_n)\geq k_{\Omega\cap U_p}(x_n,y_n)-c $$ $$\geq \dfrac{1}{2}\log\left(\dfrac{\delta_{\Omega\cap U_p(y_n)}}{\delta_{\Omega\cap U_p(x_n)}}\right) - c' - c = \dfrac{1}{2}\log\left(\dfrac{\delta_\Omega(y_n)}{\delta_\Omega(x_n)}\right) - c' - c $$
	so we see that $p=q\in\partial\Omega$ case is also impossible.
\end{proof}	
	
Now, we can reach the desired conclusion. \\

\textit{Proof of Theorem \ref{secondbigcfb}.}
Theorem follows by imitating the proof of the convex case, by applying Theorem \ref{convexgoldilocks} instead of Lemma \ref{convexlemma}. $ \endofproof$

\section{Lower Bounds of the Kobayashi distance}\label{kobayashilowerbounds}
In this subsection we will provide two different lower bounds for the Kobayashi distance in a convexifiable SG domain $\O$, in terms of the functions $\omega_\O$ and $g_\O$. We will later merge them to get a unified lower bound.

Theorem \ref{firstbig} combined with Lemma \ref{convexlemma} gives:
\begin{prop}\label{lowerboundgood1}
	Let $\Omega$ be a convexifiable SG domain. Then there exists $c > 0$ such that $$ k_\Omega(x,y)\geq   \dfrac{1}{2}\log\left(\dfrac{\g^{-1}(c\|x-y\|)}{\delta_\Omega(x)}\right), \:\:\:\:\: x,y\in\O.$$
\end{prop}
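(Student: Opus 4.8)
The plan is to combine the quantitative visibility estimate of Theorem \ref{firstbig} (or rather its Corollary \ref{curvegoesinside}) with the convex-type lower bound of Lemma \ref{convexlemma}, which by Theorem \ref{convexgoldilocks} is available on convexifiable SG domains up to an additive constant. First I would fix $x,y\in\Omega$ and consider a (real or $\lambda$-) geodesic $\gamma$ for the Kobayashi-Royden pseudometric joining $x$ to $y$; such a curve exists since $\Omega$, being convexifiable and bounded, is complete hyperbolic. Set $D:=\max_{t}\delta_\Omega(\gamma(t))$ and pick $z$ on the image of $\gamma$ with $\delta_\Omega(z)=D$.

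The key two ingredients are: (i) by Corollary \ref{curvegoesinside}, $D\succeq \g^{-1}(\|x-y\|)$, i.e.\ there is $c_0>0$ with $D\geq c_0\,\g^{-1}(\|x-y\|)$; and (ii) by Theorem \ref{convexgoldilocks} applied to the pair $x,z$ (in that order), there is a constant $c_1>0$, independent of $x,y$, with
\[
k_\Omega(x,z)\geq \tfrac12\log\!\left(\frac{\delta_\Omega(z)}{\delta_\Omega(x)}\right)-c_1=\tfrac12\log\!\left(\frac{D}{\delta_\Omega(x)}\right)-c_1.
\]
Since $z$ lies on a geodesic from $x$ to $y$ we have $k_\Omega(x,y)=k_\Omega(x,z)+k_\Omega(z,y)\geq k_\Omega(x,z)$ (and $k_\Omega(z,y)\geq 0$), so
\[
k_\Omega(x,y)\geq \tfrac12\log\!\left(\frac{D}{\delta_\Omega(x)}\right)-c_1\geq \tfrac12\log\!\left(\frac{c_0\,\g^{-1}(\|x-y\|)}{\delta_\Omega(x)}\right)-c_1.
\]
Finally I would absorb the additive constant $c_1$ into the multiplicative constant inside the logarithm: writing $c:=c_0 e^{-2c_1}$ gives $k_\Omega(x,y)\geq \tfrac12\log\big(c\,\g^{-1}(\|x-y\|)/\delta_\Omega(x)\big)$, which is the claimed inequality.

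The only genuine subtlety — not really an obstacle — is making sure the constants $c_0$ and $c_1$ are uniform in $x,y$: $c_1$ is uniform because Theorem \ref{convexgoldilocks} provides a single constant for the whole domain, and $c_0$ is uniform because Corollary \ref{curvegoesinside}/Theorem \ref{firstbig} applies to every $\lambda$-geodesic with the same constant. One should also note that if $\Omega$ is not Gromov hyperbolic we must use a genuine geodesic rather than a $\lambda$-geodesic, which is fine since completeness guarantees one exists; if $\Omega$ is additionally Gromov hyperbolic, the same argument runs verbatim for any $\lambda$-geodesic. That completes the proof.
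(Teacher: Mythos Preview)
Your proposal is correct and follows essentially the same route as the paper's own proof: take a real geodesic $\gamma$ from $x$ to $y$, let $z$ realize $D=\max\delta_\Omega(\gamma)$, use Theorem~\ref{firstbig}/Corollary~\ref{curvegoesinside} to get $D\succeq \g^{-1}(\|x-y\|)$, then apply Theorem~\ref{convexgoldilocks} to the pair $(x,z)$ and use $k_\Omega(x,y)\geq k_\Omega(x,z)$ along the geodesic, finally absorbing the additive constant into the multiplicative one. The paper does exactly this, with the same handling of constants.
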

\begin{proof}
	Let $\gamma$ be a Kobayashi geodesic joining $x$ and $y$ and let $z$ be the point on image of $\gamma$ satisfying $\delta_{\Omega}(z)=D_\gamma$.
	
By Theorem \ref{firstbig}, we have that $\|x-y\| \leq \el(\gamma) \leq C\g(D_\gamma).$ Then,
by Theorem \ref{convexgoldilocks},  $$ k_\Omega(x,y) \geq k_\Omega (x,z) \geq\dfrac{1}{2}\log\left(\dfrac{c'D_\gamma}{\delta_{\Omega}(x)}\right) \geq  \dfrac{1}{2}\log\left(\dfrac{\g^{-1}(c\|x-y\|)}{\delta_{\Omega}(x)}\right),$$
where $c=c'/C.$ (Here we assume that $c'\le 1;$ then $c'\g^{-1}(r)\ge\g^{-1}(c'r)$
as a consequence of \eqref{wecangettheconstantout}).
\end{proof}

\begin{prop}\label{lowerboundugly1}
	Let $\Omega$ be a convexifiable SG domain. Then there exists $c>0$ such that for all $x,y\in \Omega$ we have that
	$$ k_{\Omega}(x,y)\geq \dfrac{1}{2}\log\left(1+\dfrac{c\|x-y\|}{\w(\delta_\Omega(x))}\right),\:\:\:\:\: x,y\in\O .$$
\end{prop}
\begin{proof}
	For convenience, let us first start with the simple case where $\Omega$ is convex. By definition we have \begin{equation}\label{effective}\dfrac{1}{\delta_{\Omega}(x;v)}\geq \kappa_\Omega(x;v)\geq \dfrac{\|v\|}{\w(\delta_{\Omega}(x))}
	\end{equation}
	so Lemma \ref{lem:proofisgiven} and Proposition \ref{propertiesofstronggoldi} directly implies that $$k_\Omega(x,y) \geq \dfrac{1}{2}\log\left(1+\dfrac{1}{\delta_\Omega\left(x;x-y\right)}\right) \geq \dfrac{1}{2}\log\left(1+\dfrac{\|x-y\|}{\w(\delta_\Omega(x))}\right).$$
	
	So in this case proposition follows.
	
	We will prove the convexifiable case by contradiction. Suppose that such an estimate does not hold, that is, there exists a sequence of points $x_n,y_n\in\O$ tending to $p,q\in\overline{\Omega}$ respectively and a sequence $C_n\rightarrow 0$ such that $$ k_\Omega(x_n,y_n)\leq \dfrac{1}{2}\log\left(1+\dfrac{C_n\|x_n-y_n\|}{\w(\delta_\Omega(x_n))}\right).$$
	
	It follows from the inequality $k_\Omega(x,y)\geq c\|x-y\|$ (see e.g. \cite[Proposition 3.5]{BZ}) valid on any bounded domain that we must have $p\in\partial \Omega$.
	
	Furthermore, by Lemma \ref{anypointiskpoint} we see that $p$ is a $k$-point, so if $x_n,y_n\rightarrow p,q$ and $p\neq q$ we have $$k_\Omega(x_n,y_n)\geq\dfrac{1}{2}\log\left(\dfrac{1}{\delta_\Omega(x_n)}\right)-c\geq\dfrac{1}{2}\log\left(\dfrac{1}{\w(\delta_\Omega(x_n))}\right)-c$$ therefore we must have $q=p\in\partial\Omega$.
	
	From now on our approach will be similar to the proof of Theorem \ref{convexgoldilocks}.
	Note that if $f:D_1\rightarrow D_2$ is a biholomorphism of the domains $D_1,D_2\subset\Cn$ that extends to the boundary, then
	\begin{equation}\label{important2}\|x-y\|\sim\|f(x)-f(y)\|\:\:\:\:\:\text{and}\:\:\:\:\:
		\dfrac{\|x-y\|}{	\delta_{D_1}\left(x;\dfrac{v}{\|v\|}\right)}\sim\dfrac{\|f(x)-f(y)\|}{	\delta_{D_2}\left(f(x);\dfrac{df_x v}{\|df_x v \|}\right)}, \:\:\:\:\: x,y\in D_1, \:\:\:v\in\Cn.\end{equation}
	
	Now, we take a neighbourhood of $p$, $U_p$ so that $\Psi$ is a biholomorphism of $U_p$ so that $G:=\Psi(\O\cap U_p)$ is convex. By \eqref{important2}, applying Lemma \ref{lem:proofisgiven} to $G$ and Lemma \ref{lem:localization} to $\Omega$ and $\Omega\cap U_p$ gives $$ k_\Omega(x,y) \geq
	k_{\Omega\cap U_p}(x,y) - c = k_G(\Psi(x),\Psi(y)) - c $$ \begin{equation}\label{hit}\geq
		\dfrac{1}{2}\log\left(1+\dfrac{c' \|\Psi(x)-\Psi(y)\|}{\delta_{\Omega}\left(x;d\Psi^{-1}_{\Psi(x)}\dfrac{\Psi(x)-\Psi(y)}{\|\Psi(x)-\Psi(y)\|}\right)}\right) - c.
	\end{equation}

	Due to \eqref{effective}, \eqref{hit} shows that $p=q$ case is also impossible. This concludes the proof of the proposition.
\end{proof}
Although they are proven with global assumptions, we remark that the estimates above are also of local character. That is to say, if $U_p$ is a small enough convexifiable neighbourhood of $p\in\partial \Omega$ is and if the SG domain conditions are satisfied near $p$, there is another neighbourhood of $p$, $U'_p\subset \subset U_p$ such that for $x,y\in U'_p$ the estimates given above are satisfied. To see this one may use the localizations $k_\Omega\geq k_{\Omega\cap U_p}-c$ given in Lemma \ref{lem:localization} and $ck_\Omega\geq k_{\Omega\cap U_p}$ due to \cite{FR}. For details, the interested reader may examine the proofs of \cite[Theorem 1.4]{BNT} and \cite[Theorem 1.7]{NT}.

Another thing to note is that these estimates are only good when $x,y$ are close. By Lemma \ref{anypointiskpoint}, any $p\in\partial \Omega$ is a $k$-point, thus by Lemma \ref{klemma} when $x$ and $y$ are far we have the much better estimate $$k_\Omega(x,y)\geq \dfrac{1}{2}\log\left(\dfrac{1}{\delta_\Omega(x)}\right)+\dfrac{1}{2}\log\left(\dfrac{1}{\delta_\Omega(y)}\right)-c.$$

It is not difficult to see that the proof of Proposition \ref{lowerboundugly1} implicitly suggests that an analogue of Lemma \ref{lem:proofisgiven} holds on convexifiable SG domains. This means, in some sense, that Proposition \ref{lowerboundugly1} is not sharp. We include it in order to compare it with Proposition \ref{lowerboundgood1} to get a unified estimate. To state our results, we set
$$h_{\Omega,1}(x,y,c):=\dfrac{c\|x-y\|}{\g(\delta_\Omega(x))}, \: \: h_{\Omega,2}(x,y,c):=\dfrac{ \g^{-1}(c\|x-y\|)}{\delta_\Omega(x)},$$
and $h_\Omega:=\max\{h_{\Omega,1},h_{\Omega,2}\}$. With this notation we have:
\begin{thm}\label{lowerboundfinal}
	Let $\Omega$ be a convexifiable SG domain. There exists $c>0$ such that the Kobayashi distance on $\Omega$ satisfies
	$$k_\Omega(x,y) \geq \dfrac{1}{2}\log\left(1+h_\Omega(x,y,c)\right)\left(1+h_\Omega(y,x,c)\right),\:\:\:\:\: x,y\in\O.$$
\end{thm}

\begin{proof}
We will prove this corollary by comparing the lower bounds given in Proposition \ref{lowerboundgood1} and Proposition \ref{lowerboundugly1}.

It follows by the definition of $k_\Omega$ that for any $x,y\in \Omega$ and any $\epsilon>0$
one may find a point $z\in D$ such that $\|x-z\|=\|y-z\|$ and $k_\Omega(x,y)>k_\Omega(x,z)+k_\Omega(y,z)-\epsilon.$

Set $c:=\frac{c'}{4},$ where $c'>0$ is chosen such that Proposition \ref{lowerboundgood1} and Proposition \ref{lowerboundugly1}
hold. As the functions $\g^{-1}(r)/r$ and $r/\g(r)$ are increasing by Proposition \ref {propertiesofstronggoldi},
we see that we either have
$$h_{\Omega,2}(x,y,c)\geq h_{\Omega,1}(x,y,c) \geq 1  \: \text{or} \: h_{\Omega,2}(x,y,c) \leq h_{\Omega,1}(x,y,c) \leq 1.$$

\textit{Case I.} $h_{\Omega,2}\geq h_{\Omega,1}\geq 1$.

Proposition \ref{lowerboundgood1} leads to
$$k_\Omega(x,z)\geq\dfrac{1}{2}\log(h_{\Omega,2}(x,z,c'))\geq\dfrac{1}{2}\log(h_{\Omega,2}(x,y,2c))\geq$$
$$\dfrac{1}{2}\log(2h_{\Omega,2}(x,y,c))\geq \dfrac{1}{2}\log(1+h_{\Omega,2}(x,y,c)).$$

\textit{Case II.} $1 \geq h_{\O,1} \geq h_{\O,2}$.

By Proposition \ref {propertiesofstronggoldi} we have $\g\geq\w$, so
Proposition \ref{lowerboundugly1} immediately implies that
$$k_\Omega(x,z)\geq\dfrac{1}{2}\log\left(1+\dfrac{c'\|x-z\|}{\w(\delta_\Omega(x))}\right)\ge
\dfrac{1}{2}\log(1+h_{\Omega,1}(x,y,c)).$$

In both cases we obtain $$k_\Omega(x,z) \geq
	\dfrac{1}{2}\log\left(1+h_\Omega(x,y,c)\right). $$

Analogously, $$k_\Omega(y,z) \geq 	\dfrac{1}{2}\log\left(1+h_\Omega(y,x,c)\right).$$

Thus
$$k_\Omega(x,y)\geq \dfrac{1}{2}\log\left(1+h_\Omega(x,y,c)\right)\left(1+h_\Omega(y,x,c)\right)-\epsilon.$$

Letting $\epsilon\to 0$ the theorem follows.
\end{proof}

In order to make a final remark we recall the following result:
\begin{thm}\label{pisapaper}\cite[Theorem 1.6]{NT}
	Let $\Omega$ be a \emph{($\mathbb{C}$-)}convexifiable domain of finite $m$-type. Then there exists $c>0$ such that the Kobayashi distance on $\Omega$ satisfies
	$$ k_\Omega(x,y)\geq \dfrac{m}{2}\log\left(1+ \dfrac{c\|x-y\|}{\delta^{\frac{1}{m}}_{\Omega}(x)}\right)\left(1+ \dfrac{c\|x-y\|}{\delta^{\frac{1}{m}}_{\Omega}(y)}\right),\:\:\:\:\: x,y\in\O. $$
\end{thm}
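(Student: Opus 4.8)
\emph{Towards a proof of Theorem~\ref{pisapaper}.} This inequality is \cite[Theorem 1.6]{NT}, recalled here only as a benchmark for Corollary~\ref{lowerboundfinal}, so I will not reprove it; but let me indicate the route and why it sits just beyond the machinery of this paper. The plan has two parts: (i) a localisation reducing the statement to the model case of a bounded $\mathbb{C}$-convex domain of finite type $m$, exactly along the lines of the proofs of Lemma~\ref{anypointiskpoint}, Theorem~\ref{convexgoldilocks} and Proposition~\ref{lowerboundugly1}; and (ii) on such a model, the midpoint-splitting argument already used for Corollary~\ref{lowerboundfinal}, but fed with a \emph{sharp} one-sided estimate in place of the Goldilocks bound.

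For (i): a convexifiable domain of finite type $m$ is a convexifiable SG domain with $\w(x)\asymp x^{1/m}$, hence $\g(x)\asymp x^{1/m}\log(1/x)$ by Proposition~\ref{cconvex}; so by Lemma~\ref{anypointiskpoint} every $p\in\partial\Omega$ is a $k$-point. As in the proof of Theorem~\ref{convexgoldilocks}, Lemma~\ref{klemma} then makes the inequality trivial (with any constant) unless $x,y$ both approach one and the same boundary point $p$, so one may assume $x,y$ lie in a fixed convexifiable neighbourhood $U_p$. There Lemma~\ref{localization} gives $k_\Omega(x,y)\ge k_{\Omega\cap U_p}(x,y)-c$, biholomorphic invariance rewrites the right-hand side as $k_G(\Psi(x),\Psi(y))-c$ with $G:=\Psi(\Omega\cap U_p)$ a $\mathbb{C}$-convex finite-type-$m$ domain, and the bi-Lipschitz comparisons \eqref{important}, \eqref{important2} replace $\delta_{\Omega}(x)$, $\|x-y\|$ by their images up to fixed constants; the additive $-c$ is absorbed by shrinking the constant inside the logarithm. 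Thus everything reduces to proving, on $G$, that $k_G(x,y)\ge\tfrac{m}{2}\log\!\big(1+\tfrac{c\|x-y\|}{\delta_G(x)^{1/m}}\big)\big(1+\tfrac{c\|x-y\|}{\delta_G(y)^{1/m}}\big)$.

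For (ii): choosing a near-geodesic $\gamma$ from $x$ to $y$, a point $z\in\gamma$ with $\|x-z\|=\|z-y\|=\tfrac12\|x-y\|$, and letting $\lambda\to1$ in \eqref{royden}, it suffices to establish the one-sided bound $k_G(x,z)\ge\tfrac{m}{2}\log\big(1+\tfrac{c\|x-y\|}{\delta_G(x)^{1/m}}\big)$ and its twin for $z,y$. Here is the crux, and the main obstacle. Combining $m$-convexity $\delta_G(x;v)\preceq\delta_G(x)^{1/m}$ with Lemma~\ref{lemmaneedsproof} gives this bound \emph{with the constant $1$ instead of $m$}; likewise the present paper's Theorem~\ref{firstbig} only yields apex height $D\succeq\|x-y\|^m/\log^m(1/\|x-y\|)$ for $\gamma$, carrying spurious logarithms. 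Upgrading the constant from $1$ to $m$ and deleting those logarithms forces one to use the fine anisotropic structure of the Kobayashi metric on $\mathbb{C}$-convex finite-type domains — morally $\kappa_G(x;v)\asymp |v_N|/\delta_G(x)+|v_T|/\delta_G(x)^{1/m}$ in an adapted frame, equivalently that a geodesic from $x$ to $y$ climbs to Euclidean height $\asymp\|x-y\|^m$ with no logarithmic loss — and then to telescope over $\asymp m\log(\|x-y\|/\delta_G(x)^{1/m})$ dyadic shells, each contributing $\tfrac12\log e$ via Lemma~\ref{convexlemma}. Supplying that refined input (the structure theory of $\mathbb{C}$-convex finite-type domains, minimal bases, McNeal-type metric estimates) is exactly the content of \cite{NT}; it is genuinely stronger than the SG axioms, which is why Theorem~\ref{pisapaper} cannot be reached by the methods of the previous sections and is only quoted here.
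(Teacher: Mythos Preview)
Your reading is correct: the paper does not prove Theorem~\ref{pisapaper} at all. It is simply quoted from \cite[Theorem 1.6]{NT} as a benchmark against which Corollary~\ref{lowerboundfinal} is measured, and the paper's closing paragraph explicitly concedes that its own estimate is ``slightly worse'' by the parasite $\log$ terms and that its construction ``avoids the machinery of peak functions'' used in \cite{NT}. Your sketch of the localisation step~(i) is faithful to the paper's own arguments (Lemmas~\ref{anypointiskpoint}, \ref{localization}, Theorem~\ref{convexgoldilocks}), and your diagnosis in~(ii) of exactly where the SG axioms lose the factor $m$ and pick up logarithms is accurate and matches the paper's own assessment. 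One minor addendum: the refined input you allude to (anisotropic metric estimates, minimal bases) is in \cite{NT} packaged via pluricomplex Green/peak function estimates rather than directly through McNeal-type frames, which is what the paper means by ``the machinery of peak functions''.
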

 If $\Omega$ is $m$-convex with Dini-smooth boundary we have $$h_\Omega(x,y,c)=\max\left\{\dfrac{(c\|x-y\|) ^m}{(-\log(c\|x-y\|))^m\delta_\Omega(x)},\dfrac{c\|x-y\|}{-\log(\delta_\Omega(x))\delta^{\frac{1}{m}}_\Omega(x)}\right\} .$$

Thus, we see that ignoring the "parasite" logarithmic terms, on Dini-smooth $m$-convex case, Theorem \ref{lowerboundfinal} gives the same estimate as Theorem \ref{pisapaper}. In particular, by the examples given in \cite[Section 1.7]{NT} our estimates are pretty sharp. Although our result is slightly worse, it holds for domains with much lower boundary regularity. Moreover, our construction avoids the machinery of peak functions.
\smallskip
\subsection*{Acknowledgements}
We would like to thank to Pascal J. Thomas for many helpful conversations which lead to improvement of exposition of this manuscript. We are also grateful to the referee for a careful reading of the manuscript and for
detailed comments and suggestions which helped us to improve the manuscript.

{}

\end{document}